\documentclass{article}
\usepackage[T2A]{fontenc}
\usepackage[cp1251]{inputenc}
\usepackage[russian,english]{babel}
\usepackage[tbtags]{amsmath}
\usepackage{amsfonts,amssymb}
\usepackage{graphicx}
\usepackage{amsthm}
\usepackage[left=2.5cm,right=2.5cm,top=2.5cm,bottom=2.5cm]{geometry}

\theoremstyle{plain}

\newtheorem{theorem}{Theorem}

\newtheorem{lemma}{Lemma}
\newtheorem{corollary}{Corollary}

\newtheorem{remark}{Remark}

\begin{document}

\begin{center} {Boundary value problems for some quasilinear parabolic equations \\ under minimal conditions on the coefficients}

\vspace{0.5cm}

S.G.  Pyatkov

\vspace{0.5cm}

Engineering School  of Digital Technologies, Yugra State University,  \\
  Khanty-Mansiysk, Russia, s\_pyatkov@ugrasu.ru

\noindent\parbox{11cm}{ \abstract{Quasilinear parabolic boundary value problems  are considered. 
 Sharp conditions  on the data ensuring existence and uniqueness  of solutions in Sobolev
classes  are exposed. They are smoothness  and consistency  conditions on the
data and some inequalities ensuring the maximum  principle for a  solution.  The proof relies on
a priori bounds and the method of  continuation in a parameter. } }
\end{center}

\bigskip
Keywords: quasilinear parabolic equation; Sobolev space; maximum principle;
existence;  uniqueness

\medskip

MSC: 35K59; 35K61; 35K20

\section{Introduction}

 We examine quasilinear parabolic equations of the form 
\begin{equation}\label{e1}
	Mu= u_t-\sum_{i,j=1}^n \partial_{x_i}(a_{ij}(t,x,u)u_{x_j})+ b(x,t,u,\nabla u)=0,
\end{equation}
where $(t,x)\in Q = (0,T) \times G$,   $G\subset {\mathbb R}^n$ is a bounded domain in ${\mathbb R}^n$ with boundary   $\Gamma$. Let    $S= (0,T)\times\Gamma$.
The initial and boundary conditions are written in the form either
\begin{equation}\label{e2}
	u|_{t=0}=u_0,\ \ u|_{S}= g(t,x), 
\end{equation}
or
\begin{equation}\label{e3}
	u|_{t=0}=u_0,\ \ \frac{\partial u}{\partial N}+ \psi(t,x,u)|_{S}=0, \ \frac{\partial u}{\partial N}= \sum_{i,j=1}^na_{ij}u_{x_j}\nu_i.
\end{equation}

Equations of parabolic type are encountered in many areas of mathematics and mathematical physics, and those encountered most frequently are linear and quasi-linear parabolic equations of the second order. They  are considered by many authors. First, we should refer  to the monographs \cite{lad}, \cite{lie}, where solutions to the problems 
\eqref{e1}, \eqref{e2} and  \eqref{e1}, \eqref{e3} are sought   in the H\"{o}lder spaces. In our opinion,  the conditions on the coefficients of the equation here are far from optimal.
The problem \eqref{e1}, \eqref{e2} in Sobolev spaces is  considered in \cite[Theorem 4, Ch. 15, Sect. 1]{kry1}, where the functions $a_{ij}$ meet the Lipschitz conditions in $u$ and the function 
  $b(t,x,u,p)$ has at most linear growth in the variables $(u,p) $ and satisfies the Lipschitz condition in $p$.
The local (in time) solvability results in \cite{wei} for the problem  \eqref{e1}, \eqref{e3} are proven under the condition that the functions $a_{ij}$ are of the class $C^3$ in all variables
and the function $b$ meets the Lischitz condition in $(u,p)$. 
Much weaker conditions on the functions $a_{ij}$ with  almost the same conditions on $b$ are used in the article \cite{res} (see also the references therein), where the functions $a_{ij}$ are only $C^1$-functions in $u$ and the Dirichlet boundary conditions were used.
The semigroup approach to quasilinear evolution problems  has been developed  in \cite{lun}, \cite{yagi}, \cite{ama1}. Here in the quasilinear case the functions spaces used consist of functions H\"{o}lder continuous or continuously differentiable with respect to $t$. The weighted Sobolev spaces 
are used in \cite{cro}, \cite{koh} also in the abstract setting, where the approach based on maximal $L_p$-regularity is involved. In this case, as well as in many other 
articles devoted abstract problems,   the main part of the operator is independent of $t$. 
Properties of viscosity solutions are studied in \cite{silv} and many other articles (see \cite{kry1}). There are a lot of articles devoted to different model cases (see, for instance, \cite{maz1}).
  
Our main results are connected with existence and uniqueness theorems  for solutions to the problems  \eqref{e1}, \eqref{e2} and  \eqref{e1}, \eqref{e3}
under the minimal smoothness conditions on the data of the problems, in particular, on the coefficients of  the parabolic operator and the function $\psi$ in \eqref{e3}.
A solution is sought in the class $W_p^{1,2}(Q)$. To establish the H\"{o}lder continuity of a solution,  we use the approach proposed in \cite{lad}.
In contrast to the results of \cite{lad}, \cite{lie}, to prove   maximum estimates for a solution, we use a different  approach, with allows 
easily prove  maximum estimates in the case of the boundary condition \eqref{e3} for generalized solutions which are absent in  \cite{lad}, \cite{lie}.
The $W_p^{1,2}(Q)$- estimates of a solution relies on arguments with the use of frozen coefficients. The results can be used in: establishing solvability results  for a large class of inverse parabolic problems; establishing existence and uniqueness of solutions under perturbations;  providing constructive approximation methods via iterative techniques.

\section{Preliminaries}

Let $E$ be a Banach space.
The notations for Sobolev and Besov spaces $W_p^s(G;E)$,  $B_{pq}^s(G;E)$, $W_p^s(Q;E)$, etc.
    are conventional (see \cite{tri,ama}). If $E={\mathbb R}$ or
    $E={\mathbb R}^n$ then we denote the last space simply
    by  $W_{p}^{s}(Q)$. The  H\"{o}lder spaces
    $C^{\alpha,\beta}(\overline{Q}),
    C^{\alpha,\beta}(\overline{S})$ are defined, for example,  in
    \cite{lad}.
 Given an interval $J=(0,T)$, denote
    $W_p^{s,r}(Q)=W_p^{s}(J;L_p(G))\cap L_p(J;W_p^r(G)) $.
    Similarly,   $W_p^{s,r}(S)=W_p^{s}(J;L_p(\Gamma))\cap
    L_p(J;W_p^r(\Gamma))$.
Let     $(u,v)=\int_{G} u(x)v(x)\, dx$. All considered spaces and coefficients of the equation \eqref{e1} are assumed to be
      real.            Denote    $Q_{\tau}=(0,\tau)\times G$ and  $S_\tau=(0,\tau)\times \Gamma$.
We assume that  $\Gamma\in C^2$,   (see the definition  in \cite[Chapter 1]{ lad}).

Proceed with one auxiliary result. 
Consider the linear problem 
\begin{equation}\label{a1}
	Mu= u_t-\sum_{i,j=1}^n \partial_{x_i}(a_{ij}u_{x_j})+ \sum_{i=1}^n a_iu_{x_i}+ a_0u=f,
\end{equation}
where $(t,x)\in Q = (0,T) \times G$. Initial and boundary conditions are written as follows: 
either
\begin{equation}\label{a2}
	u|_{t=0}=u_0,\ \ u|_{S}= g(t,x), 
\end{equation}
or
\begin{equation}\label{a3}
	u|_{t=0}=u_0,\ \ \frac{\partial u}{\partial N}+\sigma u|_S=g, \ \frac{\partial u}{\partial N}= \sum_{i,j=1}^na_{ij}u_{x_j}\nu_i,
\end{equation}
with $\vec{\nu}=(\nu_1,\ldots,\nu_n)$ the outward unit normal to $\Gamma$. 
It is assumed that 
\begin{equation}\label{a4}
u_0\in W_p^{2-2/p}(G),\  g\in W_p^{k_0,2k_0}(S), \  f\in L_p(Q), \  \Gamma\in C^2,\ p>\frac{n+2}{2},
\end{equation}
where $k_0=s_1=1-1/2p$ in the case of the  conditions   \eqref{a2} and $k_0=s_0=1/2-1/2p$ otherwise.  Other conditions on the data are as follows: 
there exists a constant $\delta_0>0$ such that
\begin{equation}\label{a5}
\delta_0|\xi|^2\leq  \sum_{i,j=1}^n a_{ij}\xi_j\xi_i\leq |\xi|^2/\delta_0\  \forall \xi\in {\mathbb R}^n;
\end{equation}
\begin{equation}\label{a501}
a_{ij}\in C(\overline{Q}), \ a_{ijx_i}, a_i\in L_{q_2}(Q) \ (q_2>n+2, q_2\geq p),\ a_0\in L_{q_1}(Q) \ (q_1>\frac{n+2}{2},\ q_1\geq p);
\end{equation}
\begin{equation}\label{a502}
a_{ij},\sigma \in B_{q_3p}^{s_0}(0,T; L_{q_3}(\Gamma))\cap L_{q_3}(0,T; B_{q_3p}^{2s_0}(\Gamma)), \ \frac{n+1}{q_3}< 1-\frac{1}{p}=2s_0, \ q_3\geq p,
\end{equation}
where $i,j=1,2,\ldots,n$.

The following theorem is valid   (we use slightly stronger conditions than those in
 \cite[Theorem 3.4]{lind} or \cite[Theorem 5.3]{lind1}, see also  \cite{den},    \cite[Theorem 2.1]{pya11}). 
   
\begin{theorem} \label{thr0}   Let  $ p>(n+2)/2$ and let the conditions 
{\rm     \eqref{a4}-\eqref{a501}} in the case of the problem {\rm \eqref{a1}, \eqref{a2}} or the conditions    {\rm  \eqref{a4}-\eqref{a502}} in the case of the problem  {\rm \eqref{a1}, \eqref{a3}} hold. In the latter case we also assume that $p\neq 3$.
 Then there exists a unique solution   $u$ to the problems  {\rm \eqref{a1}, \eqref{a2}} and {\rm \eqref{a1}, \eqref{a3}}  such that 
 $u\in W_p^{1,2}(Q)$ and $u$ satisfies the estimate 
\begin{equation}\label{a6}
 \|u\|_{W_{p}^{1,2}(Q)}\leq
C_{0}(\|u_{0}\|_{W_{p}^{2-2/p}(G)}+
\|f\|_{L_{p}(Q)}+\|g\|_{W_{p}^{k_{0},2k_{0}}(S)}),
\end{equation}
where we can assume that the constant  $C_0$ depends only on  $\delta_0$. It depends also on the norms of coefficients  and the domain $G$ but
it is bounded whenever these  norms in the spaces indicated  in \eqref{a501}, \eqref{a502} are bounded. 
\end{theorem}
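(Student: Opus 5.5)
We prove Theorem~\ref{thr0} by reducing it to the classical maximal $L_p$-regularity theory for parabolic operators with continuous leading coefficients (as in \cite[Theorem 3.4]{lind}, \cite[Theorem 5.3]{lind1}). The lower-order terms are treated as perturbations that are small on short time intervals.

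\textbf{Step 1: the model operator.} We first consider
\[
L_0u=u_t-\sum_{i,j}a_{ij}u_{x_ix_j}.
\]
Writing $\partial_{x_i}(a_{ij}u_{x_j})=a_{ij}u_{x_ix_j}+a_{ijx_i}u_{x_j}$ moves the divergence part onto the first-order coefficient $a_j+\sum_i a_{ijx_i}\in L_{q_2}(Q)$. For $a_{ij}\in C(\overline Q)$ satisfying \eqref{a5}, the problem $L_0u=f$ with data \eqref{a2} has a unique solution in $W_p^{1,2}(Q)$ obeying \eqref{a6}. This follows by the standard frozen-coefficient argument: a partition of unity with small support, flattening of the $C^2$ boundary, half-space and whole-space results for constant coefficients, and absorption of the oscillation terms using the uniform continuity of $a_{ij}$.

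For the conormal condition \eqref{a3} the same scheme works, with two extra requirements:
\begin{itemize}
\item the traces $a_{ij}|_S$ and $\sigma$ appear as multipliers on the space $W_p^{s_0,2s_0}(S)$ of traces of $\nabla u$;
\item \eqref{a502}, with $(n+1)/q_3<2s_0$, is exactly a multiplier condition ensuring that $\sigma u|_S$ and $a_{ij}\nu_iu_{x_j}|_S$ belong to $W_p^{s_0,2s_0}(S)$, with bounds in terms of $\|u\|_{W_p^{1,2}}$. It also keeps the lower-order boundary term $\sigma u$ compact/small.
\end{itemize}
The restriction $p\neq3$ excludes the exceptional value where $2s_0-1/p$ hits the trace threshold, i.e.\ where the compatibility condition at $t=0$, $S$ changes meaning.

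\textbf{Step 2: lower-order terms.} For $u\in W_p^{1,2}(Q_\tau)$ with $u(0)=0$, the embeddings $\nabla u\in L_r$ and $u\in L_s$ hold for suitable $r,s$, because $p>(n+2)/2$ and mixed-norm Sobolev embedding apply. By Hölder's inequality,
\[
\|a_iu_{x_i}\|_{L_p(Q_\tau)}\le \|a_i\|_{L_{q_2}(Q_\tau)}\|\nabla u\|_{L_{r}(Q_\tau)},\qquad \tfrac1p=\tfrac1{q_2}+\tfrac1r,
\]
and this is finite since $q_2>n+2$. Similarly, $q_1>(n+2)/2$ handles $a_0u$.

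Two routes then close the argument:
\begin{itemize}
\item By absolute continuity of the integral, $\|a_i\|_{L_{q_2}(Q_\tau)}$ and $\|a_0\|_{L_{q_1}(Q_\tau)}$ are small for small $\tau$. Since the embedding constants for zero initial data are independent of $\tau$, the perturbation has small norm relative to $L_0^{-1}$, and a Neumann series gives solvability on $(0,\tau)$.
\item Alternatively, use interpolation, e.g.\ $\|\nabla u\|_{L_r}\le\varepsilon\|u\|_{W_p^{1,2}}+C_\varepsilon\|u\|_{L_p}$, together with a Gronwall-type bound for $\|u\|_{L_p(Q_t)}$ in $t$.
\end{itemize}
Iterating over finitely many intervals of length $\tau$ covers $(0,T)$. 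At each restart the datum $u(\tau)\in W_p^{2-2/p}(G)$ is controlled by the trace theorem, and the resulting constant depends only on $\delta_0$, the modulus of continuity of $a_{ij}$, the coefficient norms and $G$. Uniqueness follows from the same estimate applied to the difference of two solutions with zero data.

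\textbf{Main obstacle.} The hardest part is the boundary estimate in the Neumann case. One has to show that, with coefficients only in the Besov class \eqref{a502} on $S$, the map $u\mapsto \partial u/\partial N+\sigma u$ can be localized and frozen with small error in $W_p^{s_0,2s_0}(S)$. This requires pointwise multiplier estimates in anisotropic Besov spaces together with the smallness of oscillations of $a_{ij}|_S$ on small cylinders. I would first try to obtain this from the continuity of $a_{ij}$ and the embedding $B^{s_0}_{q_3p}\cap L_{q_3}B^{2s_0}_{q_3p}\subset C^{\gamma/2,\gamma}(\overline S)$, which holds since $(n+1)/q_3<2s_0$.
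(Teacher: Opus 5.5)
Your outline is sound and follows the route the paper relies on: the paper gives no proof of Theorem~\ref{thr0} but takes it from the maximal $L_p$-regularity results in \cite{lind}, \cite{lind1}, \cite{den}, \cite{pya11}, which rest on the same scheme you describe (localizing and freezing the continuous $a_{ij}$, multiplier estimates for the boundary coefficients under \eqref{a502} as in the paper's Lemma~\ref{lem3}, and perturbation of the lower-order terms). Two small caveats: like the statement itself, your Step~1 tacitly requires the compatibility conditions $g(0,\cdot)=u_0|_{\Gamma}$ (Dirichlet) and $\partial u_0/\partial N+\sigma(0,\cdot)u_0=g(0,\cdot)$ on $\Gamma$ when $p>3$ (conormal), without which no $W_p^{1,2}$ solution exists; and only your interpolation/Gronwall route (not the absolute-continuity one, where $\tau$ depends on the functions $a_i,a_0$ themselves) gives a constant $C_0$ controlled by the coefficient norms together with the modulus of continuity of $a_{ij}$, which you rightly include.
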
 

Let $v^+(t,x)=\max(v(t,x),0)$.
The following theorem results from Lemma 1.2.3 in \cite{maz}. 

\begin{theorem}\label{thr1} Let $u$ be a nonnegative measurable function in $Q$. Then 
\begin{multline*}
\int_Q u\,dxdt= \int_0^\infty \mu(M_k)dk=\int_0^\infty \mu(L_k)dk, \\ M_k=\{(t,x)\in Q: u(t,x)\geq k\}, L_k=\{(t,x)\in Q: u(t,x)>  k\},
\end{multline*} 
where $\mu(\cdot)$ is the Lebesgue measure.
\end{theorem}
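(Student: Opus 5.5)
The identity is the layer-cake (Cavalieri) representation, and I would prove it with Tonelli's theorem applied to the indicator of the subgraph of $u$.

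Consider the set
\[
E=\{(t,x,k)\in Q\times(0,\infty):\ u(t,x)\geq k\}.
\]
This set is measurable in $Q\times(0,\infty)$, because the map $(t,x,k)\mapsto u(t,x)-k$ is measurable with respect to the product $\sigma$-algebra.

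For fixed $(t,x)$ we have
\[
u(t,x)=\int_0^\infty \chi_{\{k\le u(t,x)\}}\,dk .
\]
This holds also when $u(t,x)=+\infty$, and it holds when $u(t,x)=0$, since then the set of $k>0$ with $k\le 0$ is empty. Integrate this identity over $Q$. Since $\chi_E\ge 0$ and is measurable, Tonelli's theorem lets us interchange the order of integration:
\[
\int_Q u\,dxdt=\int_Q\int_0^\infty\chi_E\,dk\,dxdt=\int_0^\infty\int_Q\chi_E\,dxdt\,dk=\int_0^\infty\mu(M_k)\,dk .
\]
No finiteness assumption on $\int_Q u$ is needed, since Tonelli's theorem applies to nonnegative functions with values in $[0,\infty]$.

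For the second equality, the same argument works with the strict inequality $u(t,x)>k$, because the set $\{k>0: k<u(t,x)\}$ also has measure $u(t,x)$. Alternatively, one can argue directly. The function $k\mapsto\mu(M_k)$ is nonincreasing, and $L_k\subset M_k\subset L_{k'}$ for $k'<k$. Hence $\mu(L_k)=\mu(M_k)$ at every point of continuity of $\mu(M_k)$, and a monotone function has only countably many discontinuities, so the two integrands agree almost everywhere in $k$.

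The only point that needs care is the measurability of $E$ needed to apply Tonelli's theorem. Everything else is routine.
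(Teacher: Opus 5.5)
Your proof is correct. The paper gives no argument of its own: it simply derives the statement from Lemma 1.2.3 in Maz'ya's book, the layer-cake representation, and your Tonelli argument on the indicator of $\{(t,x,k):\ u(t,x)\geq k\}$ is the standard proof of that formula, with the measurability of the subgraph handled adequately. Both of your treatments of the strict level sets $L_k$ also establish the second equality. One is rerunning Tonelli with $\{k<u(t,x)\}$. The other uses monotonicity of $k\mapsto\mu(M_k)$, the inclusions $M_{k''}\subset L_k\subset M_k$ for $k''>k$, and the countability of its discontinuities.
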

\begin{corollary}\label{col1} Let  $u$ be a nonnegative measurable function in  $Q$. Then 
$$
\int_Q (u-k)^+\,dxdt= \int_k^\infty \mu(M_\tau)\,d\tau =\int_k^\infty \mu(L_\tau)\,d\tau, \ k>0.\
$$ 
\end{corollary}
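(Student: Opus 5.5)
We apply Theorem \ref{thr1} to the nonnegative measurable function $w=(u-k)^+$ and then change variables in the level parameter. Since $w\geq 0$ is measurable, Theorem \ref{thr1} gives
$$
\int_Q (u-k)^+\,dxdt=\int_0^\infty \mu(\{w\geq s\})\,ds=\int_0^\infty \mu(\{w>s\})\,ds .
$$

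Next we identify the level sets of $w$ with those of $u$. For $s>0$ we have $w(t,x)\geq s$ if and only if $u(t,x)-k\geq s$, because $w\geq s>0$ forces $w=u-k$. Hence $\{w\geq s\}=M_{k+s}$ for every $s>0$. In the same way $\{w>s\}=L_{k+s}$ for every $s\geq 0$, since $w>s\geq0$ again forces $w=u-k$.

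The point $s=0$ needs a separate remark for the first formula: there $\{w\geq 0\}=Q\neq M_k$ in general. A single point has Lebesgue measure zero in the $ds$-integral, so this does not change the integral.

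Finally we substitute $\tau=k+s$, which shifts the integration to $(k,\infty)$. This gives
$$
\int_Q (u-k)^+\,dxdt=\int_k^\infty \mu(M_\tau)\,d\tau=\int_k^\infty \mu(L_\tau)\,d\tau .
$$
There is no real obstacle here beyond the null-set remark at $s=0$. The integrals may equal $+\infty$; the identity holds in $[0,\infty]$, as in Theorem \ref{thr1}.
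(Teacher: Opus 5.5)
Your proof is correct and follows the paper's argument: apply Theorem~\ref{thr1} to $(u-k)^+$, identify its level sets with $M_{k+s}$ and $L_{k+s}$, and substitute $\tau=k+s$. Your remark about $s=0$ is a small improvement on the paper. The paper asserts $\tilde{M}_\xi=M_{k+\xi}$ for all $\xi\geq 0$, which fails at $\xi=0$, though only at a single value of the parameter, so the integral is unaffected.
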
\label{col1}
\begin{proof} Indeed, 
$$
\int_Q(u-k)^+\,dxdt= \int_0^\infty \mu(\tilde{M}_\xi)d\xi =\int_k^\infty \mu(\tilde{L}_\xi) d\xi,\
$$
where $\tilde{M}_\xi=\{(t,x):\ (u-k)^+\geq \xi\}=\{(t,x):\ u(t,x)\geq k+\xi\}$. In this case, we infer 
$\tilde{M}_\xi=M_{k+\xi}$ and
 $\int_Q (u-k)^+\,dxdt= \int_0^{\infty}\mu(M_{k+\xi})\,d\xi$. Make the change of variables  $k+\xi=\tau$. We obtain that 
$\int_Q (u-k)^+\,dxdt=\int_k^\infty \mu(M_\tau)\,d\tau$. The case of the set $L_\xi$ is considered by analogy.
\end{proof}

Below, we present an  inequality similar to that in Sect. 3 of Ch. 2 in \cite{lad}.

\begin{lemma}\label{lem1}
Let  $u\in L_\infty(0,T;L_2(Q))\cap L_2(0,T;W_2^1(G))$. Then  $u\in L_r(Q)$, $r\in [2, 2(n+2)/n]$, and 
\begin{multline*}
\|u\|_{L_r(Q)}\leq c_0\|u\|_{L_\infty(0,T;L_2(G))}^{1-s} \|u\|_{L_2(0,T;W_2^1(G))}^s\leq \\
c_0(\|u\|_{L_\infty(0,T;L_2(G))}^2+ \|u\|_{L_2(0,T;W_2^1(G))}^2)^{1/2}=c_0|u|_1, 
\end{multline*}
where $\  s=n(r-2)/2r.$
\end{lemma}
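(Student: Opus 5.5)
The plan is the standard Ladyzhenskaya-type argument: a Gagliardo--Nirenberg inequality for each fixed $t$, followed by H\"older integration in time. Note that the hypothesis should read $u\in L_\infty(0,T;L_2(G))$.

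\emph{Step 1: the spatial inequality.} For a.e.\ fixed $t$, apply the Gagliardo--Nirenberg (multiplicative) inequality in the bounded domain $G$, which has a $C^2$ boundary and hence an extension operator $W_2^1(G)\to W_2^1({\mathbb R}^n)$:
$$
\|v\|_{L_r(G)}\le c\,\|v\|_{L_2(G)}^{1-s}\|v\|_{W_2^1(G)}^{s},\qquad \frac1r=\frac12-\frac sn,
$$
valid for $s\in[0,1]$ and $r\in[2,2n/(n-2)]$ (any finite $r$ when $n\le 2$, the endpoint $s=1$ excluded for $n=2$). Solving $1/r=1/2-s/n$ gives exactly $s=n(r-2)/2r$. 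The inequality itself can be obtained by interpolating the Sobolev embedding $W_2^1(G)\subset L_{2n/(n-2)}(G)$ (or $L_q(G)$ for large $q$ when $n\le 2$) with the trivial $L_2$ bound, using H\"older's inequality $\|v\|_{L_r}\le\|v\|_{L_2}^{1-\theta}\|v\|_{L_{q}}^{\theta}$.

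\emph{Step 2: integration in time.} Raise the spatial inequality to the power $r$ and integrate over $(0,T)$:
$$
\int_0^T\|u\|_{L_r(G)}^r\,dt\le c^r\|u\|_{L_\infty(0,T;L_2(G))}^{r(1-s)}\int_0^T\|u\|_{W_2^1(G)}^{rs}\,dt .
$$
The key condition is $rs\le 2$. Since $rs=n(r-2)/2$, this holds precisely when $r\le 2(n+2)/n$, so the range of $r$ in the lemma is sharp for this argument. Note also that $2(n+2)/n\le 2n/(n-2)$, so Step 1 applies on the whole range. Since $T$ is finite, H\"older's inequality in $t$ gives
$$
\int_0^T\|u\|_{W_2^1}^{rs}\,dt\le T^{1-rs/2}\|u\|_{L_2(0,T;W_2^1(G))}^{rs}.
$$
Taking the $r$-th root yields the first inequality, with $c_0$ depending on $n$, $r$, $G$ and $T$. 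In particular $u\in L_r(Q)$. One should check that $t\mapsto\|u(t)\|_{L_r(G)}$ is measurable, which is standard because $u$ is measurable on $Q$.

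\emph{Step 3: the second inequality.} Set $A=\|u\|_{L_\infty(0,T;L_2(G))}$ and $B=\|u\|_{L_2(0,T;W_2^1(G))}$. Then
$$
A^{1-s}B^{s}\le \max(A,B)\le (A^2+B^2)^{1/2}=|u|_1 .
$$
This follows from Young's inequality or simply from monotonicity of powers.

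The only delicate point is Step 1: the domain version of Gagliardo--Nirenberg with the full $W_2^1$ norm, together with the borderline cases $n=1,2$. I would handle it via the extension operator and the whole-space inequality, citing the corresponding statement in Ch.~2, Sect.~3 of \cite{lad}.
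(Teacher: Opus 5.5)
Your scheme is the paper's. You prove a multiplicative inequality $\|v\|_{L_r(G)}\le c\|v\|_{L_2(G)}^{1-s}\|v\|_{W_2^1(G)}^{s}$ for a.e.\ $t$ and then integrate in time, where the condition $rs\le 2$ is exactly $r\le 2(n+2)/n$. The paper treats only the endpoint $rs=2$ and declares this sufficient: the other $r$ follow by H\"older interpolation together with $\|u\|_{L_2(Q)}\le T^{1/2}\|u\|_{L_\infty(0,T;L_2(G))}$, which reproduces the exponent $s$. Your H\"older step in $t$ covers them directly. Steps 2 and 3 are correct, and so is Step 1 for $n\ge 3$.

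The one claim that fails is your suggested derivation of Step 1 when $n\le 2$. Suppose you interpolate the integer-order embedding $W_2^1(G)\subset L_q(G)$ ($q<\infty$ for $n=2$, $q=\infty$ for $n=1$) with $L_2$ by H\"older. The exponent on $\|v\|_{W_2^1(G)}$ is then $\theta=(1/2-1/r)/(1/2-1/q)$, and $\theta>s$. For $n=2$, $r=4$ you get $\theta=q/(2(q-2))>1/2=s$; for $n=1$, $r=6$ you get $\theta=2/3=2s$. Because $\|v\|_{L_2}\le\|v\|_{W_2^1}$, this is a strictly weaker inequality. In Step 2 it gives $r\theta>2$ at the endpoint ($r\theta=4$ when $n=1$), so $\int_0^T\|u\|_{W_2^1(G)}^{r\theta}\,dt$ is no longer controlled by $\|u\|_{L_2(0,T;W_2^1(G))}$.

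In these dimensions you genuinely need the sharp inequalities $\|w\|_{L_4(\mathbb{R}^2)}^2\le c\|w\|_{L_2}\|\nabla w\|_{L_2}$ and $\|w\|_{L_\infty(\mathbb{R})}^2\le c\|w\|_{L_2}\|w\|_{W_2^1}$. They must be transferred to $G$ by an extension operator bounded simultaneously on $L_2$ and $W_2^1$; otherwise the $L_2$ factor is again replaced by the $W_2^1$ norm. Your fallback of citing \cite{lad} amounts to this, so the proof is fine once that replaces the H\"older derivation.

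The paper sidesteps the dimension split by interpolating in the fractional scale, $\|v\|_{W_2^s(G)}\le c\|v\|_{W_2^1(G)}^{s}\|v\|_{L_2(G)}^{1-s}$, and then using $W_2^s(G)\subset L_{2n/(n-2s)}(G)$. This works for every $n$, since $s\le n/(n+2)<\min(1,n/2)$.
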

\begin{proof}
The inequality follows from the interpolation inequality    $\|u\|_{W_p^s(G)}\leq c \|u\|_{W_p^1(G)}^s \|u\|_{L_p(G)}^{1-s}$ \cite{tri} 
and the embedding  $W_p^s(G)\subset L_r(G)$ ($r=2n/(n-2s)$) \cite[Sect.  4.6.1, 4.6.2]{tri}. It suffices to prove the inequality 
for $r=2(n+2)/n$, in this case 
$r=2n/(n-2s)$ with $s=n/(n+2)$ and 
\begin{multline*}
\|u\|_{L_r(Q)}\leq \Bigl(\int_0^T \|u\|_{L_r(G)}^r\,dt\Bigr)^{1/r}\leq \Bigl(\int_0^T c_1\|u\|_{W_2^s(G)}^r\,dt\Bigr)^{1/r}\leq \\
(c_1c)^{1/r}\Bigl(\int_0^T  \|u\|_{W_p^1(G)}^{2} \|u\|_{L_2(G)}^{r(1-s)}\,dt\Bigr)^{1/r}\leq (c_1c)^{1/r} 
\|u\|_{L_2(0,T;W_2^1(G))}^s \|u\|_{L_\infty(0,T;L_2(G))}^{1-s}.
\end{multline*}
\end{proof}

\section{Basic Results}

Denote  $\Gamma_T=S\cup \{(0,x): x\in G\}$.
The functions $a_{ij}$ are assumed to be continuous in $\overline{Q}\times (-\infty,\infty)$ and
\begin{equation}\label{e6}
\sum_{i,j=1}^na_{ij}p_ip_j\geq \delta_0|p|^2\  \forall \vec{p}=(p_1,\ldots,p_n) \in {\mathbb R}^n, \ (t,x)\in Q, \ u\in {\mathbb R},
\end{equation}
where $\delta_0>0$ is a constant, the function 
  $b(t,x,u,\vec{p})$ meets the Caratheodory condition and
\begin{equation}\label{e7}
b(t,x,u, \vec{p})u\geq -\beta_0a_{ij}p_ip_j -  g_2 |u|^2-g_1|u|,\  \forall \vec{p}\in {\mathbb R}^n, \ \textrm{a.e.}\ (t,x)\in Q, \  |u|\geq \tilde{m}_0, \ p\in {\mathbb R}^n.
\end{equation}
for some constant  $\tilde{m}_0>0$. Here the abbreviation a.e. stands for almost  everywhere.
Moreover, for every $R>0$, there exists a constant  $\beta_1>0$ and a function $g_3$ such that 
\begin{equation}\label{e8}
|b(t,x,u, \vec{p})|\leq \beta_1 |\vec{p}|^2 +g_3(t,x)\   \forall  (t,x)\in Q, \  u\in [-R,R], \ \vec{p}\in {\mathbb R}^n,
\end{equation}
where  $g_i\in L_{p}(Q)$, $i=1,2,3$, $g_1,g_2,g_3\geq 0$ a.e., $\beta_i$ are nonnegative constants. 

We look for a solution to the problems   $\eqref{e1}, \eqref{e2}$ and $\eqref{e1}, \eqref{e3}$ in the space 
$W_p^{1,2}(Q)$.
We additionally assume that for every  $R>0$
\begin{equation}\label{e2051}
a_{ijx_i}\in L_{q_3}(Q;C([-R,R])), \ a_{iju}\in L_\infty(Q;C([-R,R])),   \ q_3> n+2,\ q_3\geq p,\ i,j=1,2,\ldots,n. 
\end{equation}
In the case of the problem  \eqref{e1}, \eqref{e3}, we require that 
\begin{equation}\label{e2052}
\psi\in W_p^{s_0,2s_0}(S;C([-R,R])),   \ \psi_u\in L_{q_4}(S;C([-R,R])), \   p>\frac{n+2}{2}, \ q_4> n+1, \ q_4\geq p,
\end{equation}
\begin{equation}\label{e2053}
a_{ij}\in B_{q_5p}^{s_0}(0,T; L_{q_5}(\Gamma;C([-R,R])))\cap L_{q_5}(0,T; B_{q_5p}^{2s_0}(\Gamma;C([-R,R]))) \ \forall R>0,\   q_5\geq p,
\end{equation}
where  $(n+1)/q_5< 1-1/p=2s_0$ and the derivatives are  just generalized derivatives in the Sobolev sense.

By a solution to the problems $\eqref{e1}, \eqref{e2}$ and $\eqref{e1}, \eqref{e3}$, we mean a function $u\in W_p^{1,2}(Q)$, $p>(n+2)/2$ satisfying the equation \eqref{e1} and the boundary condition \eqref{e2}, respectively, \eqref{e3} a.e. in $Q$, respectively, on $S$. 
Note that if $u\in W_p^{1,2}(Q)$ $(p>(n+2)/2)$ then the embedding theorems  ensure that 
 $u|_{t=0}=u_0\in W_p^{2-2/p}(G)\subset C^{2\varepsilon_0}(\overline{G})$, $\varepsilon_0=1-(n+2)/2p$,  
$u|_{S}=g\in C^{\varepsilon_0, 2\varepsilon_0}(\overline{S})$ (\cite[Theorems  1.19, 1.22]{pya11}).

\begin{theorem} \label{thr2}
Assume that  $u\in W_p^{1,2}(Q)$ $(p>(n+2)/2)$ is a  solution to the problem  {\rm \eqref{e1}-\eqref{e2}}, the conditions 
   {\rm \eqref{e6}-\eqref{e8}} hold, and   $g(t,x)\in L_\infty(S)$, $u_0\in L_\infty(G)$. 
Then there exists a constant  $M>0$, depending on $\delta_0, \beta_0$,  $\|g_2\|_{L_{p}(Q)}$, $\|u\|_{L_\infty(\Gamma_T)}$
such that  $\|u\|_{L_\infty(Q)}\leq M$. 
    \end{theorem}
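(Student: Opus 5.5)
We prove the maximum estimate by a De Giorgi-type truncation argument in the spirit of Ch.~2 of \cite{lad}, using the level-set identity of Corollary~\ref{col1} and the parabolic embedding of Lemma~\ref{lem1}. It suffices to bound $u^+$; the bound for $u^-$ follows by applying the same argument to $-u$, since the structure conditions \eqref{e6}--\eqref{e8} are symmetric in $u$ (in particular \eqref{e7} is stated for $u\le -\tilde m_0$ as well).

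\textbf{Step 1: a convenient truncation.} Fix $k\ge k_0=\max(\tilde m_0,\|u\|_{L_\infty(\Gamma_T)})$. Because $u\in W_p^{1,2}(Q)$ with $p>(n+2)/2$, $u$ is continuous and has $L_2$ derivatives, so we may test the equation. To absorb the term $-\beta_0 a_{ij}u_{x_i}u_{x_j}$ in \eqref{e7}, we use the exponential test function $\varphi=(u-k)^+e^{\lambda u^2}$ (or $\varphi=u(u-k)^+$ combined with $e^{\lambda u}$). Here $\lambda$ is chosen depending on $\beta_0,\delta_0$, so that the extra gradient term coming from differentiating the weight dominates $\beta_0 a_{ij}u_{x_i}u_{x_j}(u-k)^+/u$. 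Since $k\ge\|u\|_{L_\infty(\Gamma_T)}$, $\varphi$ vanishes on $S$ and at $t=0$, so there are no boundary or initial terms.

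\textbf{Step 2: the energy inequality.} Multiplying \eqref{e1} by $\varphi$, integrating over $Q_\tau$ and using \eqref{e6},\eqref{e7} on the set where $u>k\ge\tilde m_0$, we aim to obtain, with $w=(u-k)^+$ (or a weighted version of it),
\[
\sup_\tau\|w\|^2_{L_2(G)}+\|\nabla w\|^2_{L_2(Q)}\le c\int_{A_k}(g_2u^2+g_1|u|)\,dxdt,\qquad A_k=\{u>k\}.
\]
The weight $e^{\lambda u^2}$ is unbounded a priori, which is a difficulty. To avoid it, we first use $\varphi=(u-k)^+$ with the substitution $v=e^{\lambda u}$ (for $u\ge k$, the Bernstein-type change $u=\frac1\lambda\ln v$ turns the quadratic gradient term of sign favorable for $\lambda\ge\beta_0$ into a nonnegative one). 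This yields an inequality for $v$ in which the right side is $\int_{A_k}(g_2+g_1)v^2$-type. Writing $v^2\le 2(v-h)^2+2h^2$ and using H\"older with $g_i\in L_p$, $p>(n+2)/2$, we get
\[
|w|_1^2\le c\|g\|_{L_p}\Bigl(\|w\|^2_{L_{2p'}(A_k)}+h^2\mu(A_k)^{1/p'}\Bigr).
\]
Since $2p'<2(n+2)/n$, Lemma~\ref{lem1} and H\"older give $\|w\|^2_{L_{2p'}}\le c\,\mu(A_k)^{\epsilon}|w|_1^2$ for some $\epsilon>0$. For $\mu(A_k)$ small this term is absorbed; smallness is arranged by also working on short time intervals $Q_\tau$ and iterating in time, with the number of intervals controlled by $\|g_2\|_{L_p}$.

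\textbf{Step 3: De Giorgi iteration.} For $h>k$ we have $(h-k)\mu(A_h)^{1/r}\le\|w\|_{L_r}\le c|w|_1$ with $r=2(n+2)/n$. Together with Step~2 this gives
\[
\mu(A_h)\le \frac{C h^{2\cdot}}{(h-k)^r}\,\mu(A_k)^{1+\epsilon'}.
\]
The standard lemma on such recurrent inequalities (Lemma~4.7 Ch.~2 of \cite{lad}) then yields $\mu(A_{k_0+d})=0$ for some $d$ depending only on $\delta_0,\beta_0,\|g_i\|_{L_p}$ and $k_0$. Corollary~\ref{col1} can be used alternatively to convert $\int(u-k)^+$ bounds into measure bounds.

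\textbf{Main obstacle.} The main obstacle is handling the quadratic gradient term $-\beta_0a_{ij}p_ip_j$ together with the quadratic growth term $g_2u^2$ with only $g_2\in L_p$. The former requires the exponential change of variables above. The latter requires the smallness/time-slicing argument so that the constant depends only on $\|g_2\|_{L_p}$, not on $u$.
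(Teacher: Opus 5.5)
\textbf{Gap in Steps 1--2.} The energy inequality, the absorption of $\|w\|_{L_{2p'}}$ and the level-set iteration are in the right spirit. However, the device you use to dispose of the term $-\beta_0a_{ij}p_ip_j$ in \eqref{e7} does not work, and your claim that it leaves a right-hand side ``of type $(g_2+g_1)v^2$'' is false. For $v=e^{\lambda u}$ one has $v_t-\partial_{x_i}(a_{ij}v_{x_j})=-\lambda^2v\,a_{ij}u_{x_i}u_{x_j}-\lambda v\,b$. On $\{u>\tilde m_0\}$, condition \eqref{e7} (which bounds $bu$, not $b$) gives
\[
v_t-\partial_{x_i}(a_{ij}v_{x_j})\le \lambda v\Bigl(\frac{\beta_0}{u}-\lambda\Bigr)a_{ij}u_{x_i}u_{x_j}+\lambda v\,(g_2u+g_1).
\]
Taking $\lambda\ge\beta_0/\tilde m_0$ kills the gradient term, but the zero-order term is now $(g_2\ln v+\lambda g_1)v$. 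Tested against $(v-h)^+$ it gives $\int g_2\,v\ln v\,(v-h)^+$, and the factor $\ln v=\lambda u$ is exactly the quantity you are trying to bound. So Step~2 is not obtained, and with $g_2$ merely in $L_p$ there is no evident way to absorb this superlinear term without already knowing a bound on $u$.

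The weight $e^{\lambda u^2}$ has the same defect. The quantity controlled by the time derivative is $\int_k^u(s-k)e^{\lambda s^2}ds\approx(u-k)e^{\lambda u^2}/(2\lambda u)$, while \eqref{e7} puts $g_2u(u-k)e^{\lambda u^2}$ on the right, larger by a factor of order $\lambda u^2$. The unboundedness of the weight, which you flag, is not the real obstruction (qualitatively $u$ is continuous on $\overline Q$). The obstruction is the mismatch between an exponential weight and the linear growth $g_2u$.

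\textbf{The missing idea.} In \eqref{e7} the gradient term enters $b$ with coefficient $\beta_0/u$. The correct integrating factor is therefore $\exp\bigl(\int^u\beta_0s^{-1}ds\bigr)=u^{\beta_0}$, a power, consistent with the invariance of \eqref{e7} under $(u,\vec p)\mapsto(su,s\vec p)$. This is what the paper does. It sets $w=|v|^m$ with $m=\beta_0+1$ and tests with $|v|^{m-2}v\,(w-k)^+$. Differentiating the test function produces $(m-1)|v|^{m-2}a_{ij}v_{x_i}v_{x_j}(w-k)^+$, which cancels the $\beta_0$-term exactly. Meanwhile $g_2u^2$ and $g_1|u|$ become $g_2\,w\,w^{(k)}$ and at most $m_0^{-1}g_1\,w\,w^{(k)}$, so they stay linear in $w$. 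With this substitution your scheme can be completed.

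\textbf{Remaining differences from the paper.} These are inessential. The paper absorbs $\|w^{(k)}\|^2_{L_{2p'}}$ not by time-slicing but by interpolating between $L_2$ and $L_{2(n+2)/n}$ and killing the $L_2$ part with the term $\lambda v$ produced by $u=e^{\lambda t}v$. This yields $|w^{(k)}|_1^2\le ck^2\mu(L_k)^{1/p'}$ on all of $Q$ at once. Instead of the recurrence lemma, it integrates $f(k)\le ck(-f'(k))^\alpha$, $\alpha>1$, for $f(k)=\int_Qw^{(k)}$, using Theorem~\ref{thr1}.

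\textbf{Correction to Step 3.} Your recursion should read $\mu(A_h)\le C\bigl(k/(h-k)\bigr)^r\mu(A_k)^{1+\epsilon'}$. It is invariant under scaling of the levels, so the recurrence lemma only starts if $\mu(A_{k_0})$ is already small. You must get that smallness from your time slices ($\mu\le\tau|G|$), or iterate on $|w^{(k_j)}|_1$ instead. The paper's ODE argument needs no such smallness.
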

      
\begin{proof} The idea of the proof is similar to that in Sect. 2 of Ch. 5 in \cite{lad} but we use Theorem 1 rather than Theorem 6.1 of Ch. 2 in \cite{lad}.   
Make the change of variables 
$u=e^{\lambda t}v$ ($\lambda >0$). The function  $v$ is a solution to the equation
\begin{equation}\label{e9}
	Mu= v_t-\sum_{i,j=1}^n \partial_{x_i}(a_{ij}(t,x,ve^{\lambda t})v_{x_j})+ e^{-\lambda t}b(x,t,ve^{\lambda t},e^{\lambda t}\nabla v)+\lambda v=0.
\end{equation}
The initial and boundary conditions are written as 
\begin{equation}\label{e10}
	v|_{t=0}=u_0,\ \ v|_{S}= g(t,x)e^{-\lambda t}, 
\end{equation}
Let  $w=|v|^m$, $w^{(k)}=(|v|^m-k)^+$, 
 $A_k(t)=\{x\in G:\  w(t,x)> k\}$. In this case  $\int_0^T A_k(t)\,dt=\mu(L_k)$, $L_k=\{(t,x): w(t,x)> k\}$.
Multiply  \eqref{e9} by   $|v|^{m-2} v w^{(k)}$ and integrate over  $G$. Choose  $m_0> \max(\|u_0\|_{L_\infty(G)},\|ge^{-\lambda t}\|_{L_\infty(S)}, \tilde{m}_0)$. 
In view of this choice,   $w^{(k)}|_{\Gamma_T}=0$ for $k\geq k_0=m_0^m$. 
Integrating by parts, we obtain that 
\begin{multline}\label{e11}
\frac{1}{m}\frac{\partial}{\partial t} \int_G (w^{(k)})^2\,dx +\int_{A_k(t)} 
\sum_{i,j=1}^n a_{ij}v_{x_j}v_{x_i} [(m-1)|v|^{m-2}w^{(k)}+ m|v|^{2m-2}] \\ + 
e^{-\lambda t}b(t,x,ve^{\lambda t},e^{\lambda t}\nabla v)|v|^{m-2} v w^{(k)}\,dx=0.
\end{multline}
The condition  \eqref{e7} yields 
\begin{multline*}
\frac{1}{m}\frac{\partial}{\partial t} \int_G (w^{(k)})^2\,dx +
\int_{A_k(t)} \sum_{i,j=1}^m a_{ij}v_{x_j}v_{x_i} [(m-1)|v|^{m-2}w^{(k)}+ m|v|^{2m-2}]+\lambda ww^{(k)}\,dx \leq  \\
\int_{A_k(t)} \beta_0a_{ij}v_{x_i}v_{x_j}|v|^{m-2}w^{(k)} + 
f|v|^{m}w^{(k)} \,dx, \  f=|g_2|+\frac{1}{m_0}g_1.
\end{multline*}
This inequality can be rewritten in the form 
\begin{multline*}
\frac{1}{m}\frac{\partial}{\partial t} \int_G (w^{(k)})^2\,dx +
\int_{A_k(t)} \sum_{i,j=1}^m a_{ij}v_{x_j}v_{x_i} [|v|^{m-2}w^{(k)}(m-1-\beta_0) + m|v|^{2m-2}]+\lambda ww^{(k)}\,dx \\ \leq 
\int_G f|v|^{m}w^{(k)} \,dx,\   
\end{multline*}
Next, we take  $m=\beta_0+1$.
In this case we arrive at the inequality 
\begin{equation}\label{e14}
\frac{1}{m}\frac{\partial}{\partial t} \int_{G} (w^{(k)})^2\,dx +
\frac{\delta_0}{2m}  \int_{G} |\nabla w^{(k)}|^2+ \lambda w w^{(k)}\,dx \leq  \int_G f w w^{(k)} \,dx. 
\end{equation}
Integrating in $t$, we conclude that 
\begin{equation}\label{e15}
\max_{t\in (0,T)} \int_{G} (w^{(k)})^2\,dx +
\int_{Q} |\nabla w^{(k)}|^2+ \lambda  w w^{(k)}\,dxdt \leq  c_0\int_{L_k} f w w^{(k)} \,dxdt, 
\end{equation}
where the constant  $c_0$ depends on  $m,\delta_0$. 
The H\"{o}lder inequality implies that 
$$
\int_{L_k} f w w^{(k)} \,dxdt \leq \|f\|_{L_{p}(Q)}\|ww^{(k)}\|_{L_{p'}(L_k)},\ 1/p+1/p'=1.
$$
Since $ww^{(k)}\leq 2 (w^{(k)})^2+k^2$, we have 
$\|ww^{(k)}\|_{L_{p'}(L_k)}\leq c_1( \|w^{(k)}\|_{L_{2p'}(G)}^2+ k^2(\mu(L_k))^{1/p'})$.
This inequality,    \eqref{e15}, and the inequality  $(w^{(k)})^2\leq  ww^{(k)}$ provide the estimate 
 \begin{equation}\label{e16}
\max_{t\in (0,T)} \int_{G} (w^{(k)})^2\,dx +
\int_{Q} |\nabla w^{(k)}|^2+ \lambda  (w^{(k)})^2\,dx \leq  c_2\|w^{(k)}\|_{L_{2p'}(Q)}^2+ c_2k^2 (\mu(L_k))^{1/p'}. 
\end{equation}
The Riesz-Thorin theorem or the results in  \cite[Sect. 1.18.4]{tri} ensure that 
$\|w^{(k)}\|_{L_{2p'}(Q)}\leq \|w^{(k)}\|_{L_{r}(Q)}^\theta \|w^{(k)}\|_{L_{2}(Q)}^{1-\theta}$,
where $r=2(n+2)/n$, $\theta r+(1-\theta)2=2p'$. Since   $p>(n+2)/2$,  $\theta\in (0,1)$.  
The inequality 
\begin{equation}\label{e64}
 |ab|\leq \varepsilon |a|^p/p+\varepsilon^{1-p} |b|^{p'}/p', \  p'=p/(p-1),\ \varepsilon>0, 
 \end{equation} 
 and lemma  \ref{lem1},  imply that 
\begin{equation*}
c_2\|w^{(k)}\|_{L_{2p'}(Q)}^2\leq  \frac{1}{2}|w^{(k)}|_{1}^2+ c_3\|w^{(k)}\|_{L_{2}(Q)}^2.
\end{equation*} 
In this case the inequality  \eqref{e16} yields 
 \begin{equation}\label{e17}
\max_{t\in (0,T)} \int_{G} (w^{(k)})^2\,dx +
\int_{Q} |\nabla w^{(k)}|^2+ 2\lambda  (w^{(k)})^2\,dx \leq  2c_2k^2(\mu(L_k))^{1/p'}+ 2c_3\|w^{(k)}\|_{L_{2}(Q)}^2. 
\end{equation}
Note that the constant $c_3$ is independent of $\lambda$.
Choosing  $\lambda >\max(1,2c_3)$, we conclude that 
 \begin{equation}\label{e18}
\max_{t\in (0,T)} \int_{G} (w^{(k)})^2\,dx +
\int_{Q} |\nabla w^{(k)}|^2+ \lambda  (w^{(k)})^2\,dx \leq  2c_2k^2 (\mu(L_k))^{1/p'}.
\end{equation}
Next, the H\"{o}lder inequality, Lemma \ref{lem1} and \eqref{e18} imply that 
\begin{multline*}
\int_Q w^{(k)}\,dQ \leq \|w^{(k)}\|_{L_{2(n+2)/n}(Q)}(\mu(L_k))^{(n+4)/2(n+2)}\leq c |w^{(k)}|_1 (\mu(L_k))^{(n+4)/2(n+2)}\\ \leq 
2c\sqrt{c_2}k(\mu(L_k))^{\alpha}, \  \alpha=(n+4)/2(n+2)+ 1/(2p')>1.
\end{multline*}
Thus, we obtain the inequality 
 \begin{equation}\label{e19}
\int_Q w^{(k)}\,dQ \leq c_3 k(\mu(L_k))^{\alpha}, \  \alpha=(n+4)/2(n+2)+ 1/(2p')>1.
\end{equation}
As a consequence, we  have
 \begin{equation}\label{e195}
\int_Q w^{(k_0)}\,dQ \leq c_3 k_0 (\mu(Q))^{\alpha}=M_0, \  \alpha=(n+4)/2(n+2)+ 1/(2p')>1.
\end{equation}
By Theorem  \ref{thr1} $\int_Q w^{(k)}=\int_k^\infty \mu(L_\tau)\,d\tau=f(k)$. In this case,  $f'(k)=-\mu(L_k)$ and the inequality  \eqref{e18} can be written in the form 
 $
f(k) \leq c_3 k((-f'(k))^{\alpha},
$
 or in the form  $f'(k)\leq - c_4 f(k)^{1/\alpha}/k^{1/\alpha}$, $c_4=c_3^{-1/\alpha}$. 
Integrating this inequality, we arrive at the relation  
$$
(f(k))^{1-1/\alpha}\leq (f(k_0))^{1-1/\alpha} - \frac{c_4 k^{1-1/\alpha}}{1-1/\alpha} +  \frac{c_4 k_0^{1-1/\alpha}}{1-1/\alpha}\leq c_5-  \frac{c_4 k^{1-1/\alpha}}{1-1/\alpha}
$$
which ensures that  $f(k)\equiv 0 $ for  $k>k_1=(\frac{c_5 (\alpha-1)}{c_4\alpha})^{\alpha/(\alpha-1)}$,  $c_5=(M_0)^{1-1/\alpha}+\frac{c_4 k_0^{1-1/\alpha}}{1-1/\alpha}$.  
Hence,  $|v|^m\leq k_1$ a.e.  
\end{proof}

Proceed with the problem  \eqref{e1},  \eqref{e3}. 
We assume that  $\psi\in W_p^{s_0,2s_0}(S;C([-R,R])$ for every  $R>0$ and
\begin{equation}\label{e201}
\psi(t,x,u)sgn\,u\geq -g_4-\beta_3 |u|, 
\end{equation}
where   $\beta_3\geq 0$  a constant and $g_4\in W_p^{s_0,2s_0}(S)$.  There exists a function $\rho\in C^2(\overline{G})$ such that 
 $\nabla \rho\cdot \nu=|\nabla \rho|$, $1/2\leq |\nabla \rho|\leq 2$ on $\Gamma$
(this function was constructed in the proof of Theorem  13.1 in \cite{lie}). 
Next, consider an even extension of the function $g_4$ for $t<0$ and construct $\varphi(t)=1$ for $t>-T/2$,  $\varphi(t)=0$ for $t<-3T/4$, $\varphi(t)\in C^1([-T,T])$. 
There exists a solution   $\rho_1(t,x)\in W_p^{1,2}((-3T/4,T)\times G)$ to the problem 
$\rho_{1t}-\Delta \rho_1=0,\ \frac{\partial \rho_1}{\partial \nu}|_S=g_4\varphi(t)$, $\rho_{1}(-3T/4,x)=0$ (Theorem \ref{thr0}). 
  
\begin{theorem} \label{thr3}
 Assume that  $u\in W_p^{1,2}(Q)$ $(p>(n+2)/2)$ is a solution to the problem  {\rm \eqref{e1}, \eqref{e3}}, the conditions 
   {\rm \eqref{e6}-\eqref{e8}, \eqref{e201}} hold, and    $u_0\in L_\infty(G)$. 
Then there exists a constant  $M>0$, depending on  $\beta_0$,  $\|g_2\|_{L_{p}(Q)}$,  $\|u_0\|_{L_\infty(G)}$, and
$\|g_4\|_{W_p^{s_0,2s_0}(S)}$ such that 
 $\|u\|_{L_\infty(Q)}\leq M$. 
    \end{theorem}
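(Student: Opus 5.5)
We follow the scheme of the proof of Theorem \ref{thr2}. The new difficulty is the boundary term produced by the Neumann-type condition \eqref{e3}. The functions $\rho$ and $\rho_1$ constructed just before the statement are meant to absorb the part of $\psi$ that \eqref{e201} does not control.

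\textbf{Step 1: change of unknown.} We put
$$
u=e^{\lambda t}\bigl(v+\gamma(t,x)\bigr)\,\mathrm{sgn}
$$
in a suitably symmetrized form. More concretely, we treat $u^+$ and $u^-$ separately and write $u=e^{\lambda t+\mu\rho(x)}v+\rho_1(t,x)$, or a variant $u=e^{\lambda t}\bigl(e^{\mu\rho}v\bigr)$ with a shift by $\rho_1$. The parameters $\lambda$ and $\mu$ are to be chosen.

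The purpose of the shift by $\rho_1$ is that $\partial\rho_1/\partial\nu=g_4$ on $S$ for $t\in(0,T)$. After the change of unknown, the inhomogeneous part $-g_4$ of \eqref{e201} is therefore compensated by the normal derivative of $\rho_1$. Some care is needed because $\partial u/\partial N$ involves $a_{ij}$ rather than $\Delta$. To handle this, we use instead $\rho_1$ scaled by $c\|a\|$, or we keep the mismatch $\sum a_{ij}\rho_{1x_j}\nu_i-g_4$ and bound it via $\delta_0$ by taking $\rho_1$ with $\partial\rho_1/\partial\nu = g_4/\delta_0^{2}$, which gives $\partial\rho_1/\partial N\ge g_4$ in the right sign regime. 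Since $p>(n+2)/2$, $\rho_1\in W_p^{1,2}$ is bounded, and so are its gradient traces in $L_{q}$ norms by the embedding theorems. The new coefficients therefore still satisfy \eqref{e6}--\eqref{e8}, with $g_1,g_2$ modified by terms of the type $|\nabla\rho_1|^2$ and $|\rho_{1t}|$. These lie in $L_p$ after adjusting exponents; if needed, we use the $L_{p}$ integrability of $\nabla\rho_1$ together with $p>(n+2)/2$.

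The factor $e^{\mu\rho}$ treats the linear part $-\beta_3|u|$. Because $\nabla\rho\cdot\nu\ge 1/2$ on $\Gamma$, the normal derivative contributes $\mu a_{ij}\rho_{x_j}\nu_i v\,\mathrm{sgn}\,v\ge (\mu\delta_0/2)|v|$. With $\mu\delta_0/2>\beta_3$ this gives a nonnegative boundary term. The interior terms generated by $\rho$ are of the form $|\nabla v||v|$ and $|v|$. These are absorbed into $\beta_0a_{ij}p_ip_j+g_2|u|^2$ after enlarging $\beta_0$ and $g_2$ by constants, so \eqref{e7} persists.

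\textbf{Step 2: energy inequality.} We multiply the transformed equation by $|v|^{m-2}vw^{(k)}$, where $w=|v|^m$, $m=\beta_0'+1$ and $k\ge k_0=m_0^m$. Here $m_0$ exceeds $\|u_0\|_{L_\infty}$, $\|\rho_1\|_{L_\infty}$ and $\tilde m_0$. Integrating by parts, the boundary integral over $\Gamma\cap\{w>k\}$ now carries the sign obtained in Step 1 and can be dropped; only the initial values vanish on $\{w>k\}$. Thus we arrive at the analogue of \eqref{e14}, and from it \eqref{e15}--\eqref{e18} exactly as before. Lemma \ref{lem1} does not need vanishing boundary traces, only $w^{(k)}\in L_\infty(L_2)\cap L_2(W_2^1)$, and this is why the argument extends. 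One small adjustment is needed: $\|w^{(k)}\|_{L_2(0,T;W_2^1)}$ includes the $L_2$ part, which is controlled by the $\lambda$-term.

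\textbf{Step 3: De Giorgi-type iteration.} From \eqref{e18} we derive \eqref{e19}. The ODE argument with $f(k)=\int_k^\infty\mu(L_\tau)\,d\tau$, via Theorem \ref{thr1} and Corollary \ref{col1}, then gives $f\equiv0$ for large $k$. Hence $|v|$ is bounded, and undoing the change of unknown bounds $u$. The constants depend only on the listed quantities: $\rho$ is fixed, and $\rho_1$ is controlled by $\|g_4\|_{W_p^{s_0,2s_0}(S)}$ through \eqref{a6}.

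The main obstacle is Step 1. We must make the boundary term signed while keeping the transformed nonlinearity within the structure \eqref{e7}, with lower-order coefficients whose integrability is compatible with $L_p$. This requires tracking how $\rho_{1t}$ and $\nabla\rho_1$ enter. If integrability of the products fails, we would instead estimate the boundary term $\int_\Gamma g_4 |v|^{m-1}w^{(k)}$ directly by a trace inequality with $\varepsilon$, and absorb it into the gradient term.
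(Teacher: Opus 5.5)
There is a genuine gap in Step 1, which carries the whole argument. A change of unknown can only alter the boundary integral in the energy identity through the conormal flux of the auxiliary function, that is, through
\[
\int_\Gamma \Bigl(\sum_{i,j=1}^n a_{ij}(t,x,u)\rho_{x_j}\nu_i\Bigr)F\,d\Gamma=\int_G \sum_{i,j=1}^n\Bigl(\partial_{x_i}\bigl(a_{ij}(t,x,u(t,x))\rho_{x_j}\bigr)F+a_{ij}\rho_{x_j}F_{x_i}\Bigr)\,dx ,
\]
and the same with $\rho_1$. So for $u=e^{\mu\rho}\tilde v$ the equation for $\tilde v$ contains three new terms: $\mu(a_{ij}+a_{ji})\rho_{x_i}\tilde v_{x_j}$, $\mu^2a_{ij}\rho_{x_i}\rho_{x_j}\tilde v$, and $\mu\tilde v\,\partial_{x_i}(a_{ij}(t,x,u)\rho_{x_j})$. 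The last one involves $a_{ij,x_i}$ and $a_{ij,u}(t,x,u)u_{x_i}$. The shift by $\rho_1$ likewise produces $\partial_{x_i}(a_{ij}(t,x,u)\rho_{1x_j})$.

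Under \eqref{e6}--\eqref{e8}, \eqref{e201} the $a_{ij}$ are merely continuous with a lower ellipticity bound. There is no upper bound and no derivative. Even under \eqref{e2051}, such bounds hold only for $|u|\le R$, so your constants would depend on $\|u\|_{L_\infty(Q)}$, the very quantity you are estimating. Hence your claim that the transformed problem satisfies \eqref{e7} ``after enlarging $\beta_0$ and $g_2$ by constants'' is unjustified. If you avoid that integration by parts, by testing the weak form with $e^{-\mu\rho}\phi$, the boundary term is just $\int_\Gamma\psi(t,x,u)e^{-\mu\rho}\phi\,d\Gamma$ again, and nothing is gained.

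Your fix for the $\rho_1$ mismatch is also false. The condition $\partial\rho_1/\partial\nu=g_4/\delta_0^2$ does not give $\sum a_{ij}\rho_{1x_j}\nu_i\ge g_4$. The conormal derivative depends on the tangential part of $\nabla\rho_1$ on $\Gamma$, which has no sign, and ellipticity only controls $\sum a_{ij}\nu_i\nu_j$. Your sign claim for $\rho$ is fine, but only because $\nabla\rho$ is parallel to $\nu$ on $\Gamma$.

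The way out, which is what the paper does, avoids changing the unknown beyond $u=e^{\lambda t}v$ as in Theorem \ref{thr2}:
\begin{enumerate}
\item Bound the extra term by $\int_\Gamma(g_4e^{-\lambda t}+\beta_3|v|)|v|^{m-1}w^{(k)}\,d\Gamma$.
\item Move the $\beta_3$-part into $G$ with the Laplacian identity $\int_\Gamma(\nabla\rho\cdot\nu)F\,d\Gamma=\int_G(\Delta\rho\,F+\nabla\rho\cdot\nabla F)\,dx$, using $\nabla\rho\cdot\nu\ge 1/2$. Move the $g_4$-part into $G$ with the analogous identity for $\rho_1$, where $\partial\rho_1/\partial\nu=g_4$. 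No $a_{ij}$ enters, which is exactly why $\rho$ and $\rho_1$ are defined through the Euclidean normal derivative.
\item Absorb the gradient terms into $\varepsilon\int_Q|\nabla w^{(k)}|^2$. The remainder has the form $\int_Q\tilde g_3\bigl((w^{(k)})^2+k^2\chi_{L_k}\bigr)$ with $\tilde g_3=|\Delta\rho_1|+|\nabla\rho_1|^2\in L_p(Q)$. Handle it like $f$ in Theorem \ref{thr2} via Lemma \ref{lem1}.
\end{enumerate}
This yields \eqref{w25}, after which your Steps 2--3 go through as written. Your closing fallback points in this direction. However, it has to be the main argument, and it must also cover the $\beta_3|v|^mw^{(k)}$ part of the boundary integral, not only the $g_4$ part.
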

\begin{proof}
The proof is in line with that of Theorem \ref{thr2}. Repeating the arguments, we arrive at \eqref{e11} with an additional  summand $J= -\int_{\Gamma}e^{-\lambda t} \psi(t,x,e^{\lambda t}v) |v|^{m-2}v w^{(k)}\,d\Gamma$  on the right-hand side. Estimate it.   
In view of the conditions on  $\psi$, we have 
$$
J\leq \int_\Gamma (g_4e^{-\lambda t}+\beta_3|v|)|v|^{m-1}w^{(k)}\,d\Gamma.
$$
 Next, 
\begin{equation*}
\int_G \Delta\rho \beta_3|v| |v|^{m-1}w^{(k)}\,dx + \int_G \nabla \rho \cdot\nabla \beta_3|v|^{m}w^{(k)}\,dx= \int_{\Gamma}\nabla \rho\cdot \nu 
\beta_3|v|^{m}w^{(k)}\,d\Gamma. 
\end{equation*}
Therefore, we derive the inequality 
\begin{multline*}
 \int_{\Gamma} \beta_3|v|^{m}w^{(k)}\,d\Gamma \leq 
 2\int_{\Gamma}\nabla \rho\cdot \nu \beta_3|v|^{m}w^{(k)}\,d\Gamma\leq  \\
 2\int_G \Delta\rho \beta_3|v|^{m}w^{(k)}\,dx+ 2\int_G \nabla \rho \cdot\nabla (\beta_3|v|^{m}w^{(k)})\,dx\leq \\
c_1 \int_G \beta_3|v|^{m}w^{(k)}\,dx+ 2\int_G |\nabla (\beta_3|v|^{m}w^{(k)})|\,dx.
\end{multline*}
The first summand here is estimated by  the quantity
\begin{equation}\label{w21}
\int_G  \beta_3|v|^{m}w^{(k)}\,dx\leq  \beta_3\int_G w w^{(k)}\,dx\leq c_5\int_G (w^{(k)})^2\,dx+ c_5k^2 \mu(A_k(t)). 
\end{equation}
The estimate for the second summand  is of the form 
\begin{equation}\label{w22}
 \int_G |\nabla \beta_3|v|^{m}w^{(k)}|\,dx\leq  \varepsilon \int_{G} |\nabla w^{(k)}|^2  + c(\varepsilon) ((w^{(k)})^2\,dx +k^2 \mu(A_k(t))).
\end{equation}
Next, we use the equality 
\begin{equation*}
\int_G \Delta\rho_1  |v|^{m-2}v w^{(k)}\,dx+ \int_G \nabla \rho_1 \cdot\nabla (|v|^{m-2}v w^{(k)})\,dx= \int_{\Gamma} g_4 |v|^{m-2} v w^{(k)}\,d\Gamma 
\end{equation*}
which guarantees the relation
\begin{equation*}
 |\int_{\Gamma}e^{-\lambda t} g_4 |v|^{m-2} v w^{(k)}\,d\Gamma|  \leq 
 \int_{\Gamma}|\nabla \rho_1| |\nabla ( |v|^{m-2}v w^{(k)})|\,d\Gamma + \frac{1}{m_0}\int_G |\Delta\rho_1|  |v|^{m} w^{(k)}\,dx.
 \end{equation*}
The former  summand on the right-hand side of the previous inequality is bounded  by 
\begin{equation}\label{w24}
 \int_{G}|\nabla \rho_1| |\nabla |v|^{m-2}v w^{(k)}|\,dx\leq  \varepsilon \int_{G} |\nabla w^{(k)}|^2  + c(\varepsilon) (|\nabla \rho_1|^2 (w^{(k)})^2 + k^2\chi_{A_k(t)})\,dx.
\end{equation}
where $\chi_{A_k(t)}$ is the indicator of the set  $A_k(t)$.
Thus, the estimates   \eqref{w24} validate the relation
$$
\int_0^T \int_Ge^{-\lambda t} g_4 |v|^{m-2} v w^{(k)}\,dxdt \leq \varepsilon \int_{Q}|\nabla w^{(k)}|^2 + c(\varepsilon) \int_Q\tilde{g}_3( (w^{(k)})^2 +k^2\chi_{L_k}(t,x))\,dxdt,
$$
 where  $\tilde{g}_3=|\Delta \rho_1|+|\nabla \rho_1|^2$.
By the embedding theorems,  $\tilde{g}_3\in L_p(Q)$.  
The same arguments as those involved in the estimate of the right-hand side 
in \eqref{e15},  ensure the inequality
 $$
 \int_Q\tilde{g}_3 (w^{(k)})^2dxdt\leq \varepsilon_1 |w^{(k)}|_1^2  + c(\varepsilon_1) \int_Q |w^{(k)}|^2\,dxdt, 
$$
with $\varepsilon_1$ an arbitrary positive constant.
Moreover, 
 $$
 k^2 \int_Q \tilde{g}_3 \chi_{L_k(t,x)} \,dxdt\leq k^2\|\tilde{g}_3\|_{L_p(Q)}(\mu(L_k))^{1/p'},\  p'=p/(p-1). 
$$ 
These inequalities  for an appropriate  $\varepsilon_1$  and \eqref{w21}, \eqref{w22} validate the estimate 
$$
\int_0^T J\leq 3\varepsilon \int_{Q} |\nabla w^{(k)}|^2 + c(\varepsilon)( \int_Q |w^{(k)}|^2\,dxdt + k^2 (\mu(L_k))^{1/p'}).
$$
Thus, the inequality  \eqref{e18} from the previous theorem for an appropriate  $\lambda$ and $\varepsilon$ is replaced with 
 \begin{equation}\label{w25}
\max_{t\in (0,T)} \int_{G} (w^{(k)})^2\,dx +
\int_{Q} |\nabla w^{(k)}|^2+ \lambda  (w^{(k)})^2\,dx \leq  c_4 k^2 (\mu(L_k))^{1/p'}.
\end{equation} 
     Further, we can repeat the arguments of the previous theorem.  
\end{proof} 

\begin{remark} \label{rem11} The claims of theorems \ref{thr2}, \ref{thr3} are valid also for generalized solutions to the problems \eqref{e1}, \eqref{e2} and   
\eqref{e1}, \eqref{e3}. The arguments are almost the same as those in  \cite[Sect. 1,2 of Ch. 5]{lad}). The only difference is that on the final step we repeat the arguments of the proofs of Theorems \ref{thr2}, \ref{thr3}.  
\end{remark}

\begin{theorem} \label{thr4}
Let  $u\in W_p^{1,2}(Q)$ $(p>(n+2)/2)$ be a solution to the problem  {\rm \eqref{e1}, \eqref{e2}} or the problem {\rm \eqref{e1}, \eqref{e3}}. Assume that the conditions {\rm \eqref{e6}-\eqref{e8}}  in the case of the problem   {\rm \eqref{e1}, \eqref{e2}} or, respectively, the  conditions  
 {\rm \eqref{e6}-\eqref{e8}},  {\rm \eqref{e201}}  in the case of the problem   {\rm \eqref{e1}, \eqref{e3}} are fulfilled. Moreover, there exists 
 $\varepsilon_0>0$ such that $u_0\in C^{2\varepsilon_0}(\overline{G})$, $g\in C^{\varepsilon_0,2\varepsilon_0}(\overline{S})$ in the case of the problem   {\rm \eqref{e1}, \eqref{e2}} and $u_0\in C^{2\varepsilon_0}(\overline{G})$ in the case of the problem   {\rm \eqref{e1}, \eqref{e3}}.
    Then there exists constants  $M_1,\alpha_1>0$, depending on   $\beta_1$, 
$\|g_3\|_{L_{p}(Q)}$, $\|u\|_{L_\infty(Q)}= M$, $\|u_0\|_{C^{2\varepsilon_0}(\overline{G})}$,  and  $\|g\|_{C^{\varepsilon_0,2\varepsilon_0}(\overline{S})}$ in the case of the problem 
  {\rm \eqref{e1}, \eqref{e2}} and  on   $\beta_1$, 
$\|g_3\|_{L_{p}(Q)}$, $\|u\|_{L_\infty(Q)}= M$, $\|u_0\|_{C^{2\varepsilon_0}(\overline{G})}$,   $\beta_3$,  $\|g_4\|_{W_p^{s_0,2s_0}(S)}$ in the case of the problem  {\rm \eqref{e1}, \eqref{e3}}
such that  $\|u\|_{C^{\alpha_1,2\alpha_1}(\overline{Q})}\leq M_1$. 
    \end{theorem}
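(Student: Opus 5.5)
The plan is the De Giorgi--Ladyzhenskaya--Ural'tseva scheme from Ch. 2, Sect. 6--8 and Ch. 5, Sect. 1 of \cite{lad}. Since $\|u\|_{L_\infty(Q)}=M$ is already fixed (Theorems \ref{thr2}, \ref{thr3}), we may freeze the coefficients along $u$ and view $u$ as a function of class $\mathcal{B}_2(\overline{Q},M,\gamma,r,\delta,\kappa)$. Hölder continuity of such functions then follows from the general theorem of \cite{lad}.

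The core step is the local energy inequalities for $(u-k)^\pm$ on cylinders $Q(\rho,\tau)=(t_0-\tau,t_0)\times (B_\rho(x_0)\cap G)$ with $k$ such that $\pm k\geq \pm u$ somewhere, $|k|\leq M$. For an interior point or a point near $S$ in the Dirichlet case, multiply \eqref{e1} by $\zeta^2 (u-k)^\pm$, where $\zeta$ is a cutoff. Near $S$ we require $k\geq \max_{S\cap Q(\rho,\tau)} g$ (resp. $\leq\min$), so the test function vanishes on $S$. Near $t=0$ we require $k\geq \max u_0$ on the ball. Using \eqref{e6} and the growth bound \eqref{e8} with $R=M$, the term $\beta_1|\nabla u|^2(u-k)^\pm$ is absorbed by the standard trick. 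Either first substitute $u=\phi(w)$ with an exponential-type $\phi$, or use $(u-k)^\pm\leq 2M$ together with the smallness $\max_{Q(\rho,\tau)}(u-k)^\pm\leq \delta$ imposed in the definition of $\mathcal{B}_2$. The term $g_3(u-k)^\pm\zeta^2$ is handled by Hölder with $g_3\in L_p$, $p>(n+2)/2$. This gives a contribution $\gamma\, \mu(A_{k,\rho}(t))^{2(1+\kappa)/r}$-type terms with $\kappa>0$ precisely because $p>(n+2)/2$, in the same way as in the proof of Theorem \ref{thr2}. The data only enter through $\|g\|_{C^{\varepsilon_0,2\varepsilon_0}}$ and $\|u_0\|_{C^{2\varepsilon_0}}$. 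These restrict the admissible $k$ on boundary cylinders, and they yield an oscillation decay of the form $\operatorname{osc}\leq c\rho^{\alpha}+c\rho^{2\varepsilon_0}$, which gives $\alpha_1\leq \varepsilon_0$.

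For the Neumann-type problem \eqref{e3} there is no restriction on $k$ near $S$ (other than $|k|\leq M$). Integration by parts produces the boundary term $-\int_{\Gamma}\psi(t,x,u)\zeta^2(u-k)^\pm\,d\Gamma$. Since $|u|\leq M$, \eqref{e201} does not bound $|\psi|$. We only get a one-sided bound $\psi\,\mathrm{sgn}\,u\ge -g_4-\beta_3 M$, which is still not enough for both $(u-k)^+$ and $(u-k)^-$. So I would use the structure of \eqref{e3} more carefully, employing $\psi\in W_p^{s_0,2s_0}(S;C([-M,M]))$ to get $|\psi(t,x,u)|\leq \tilde g(t,x)$ with $\tilde g=\sup_{|u|\le M}|\psi|\in W_p^{s_0,2s_0}(S)$. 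The boundary integral is then converted into a volume integral exactly as in the proof of Theorem \ref{thr3}. We solve the auxiliary heat problem with Neumann datum $\tilde g$ (the analogue of $\rho_1$), then use $\int_\Gamma \tilde g \zeta^2(u-k)^\pm=\int_G(\Delta\rho_1\zeta^2(u-k)^\pm+\nabla\rho_1\cdot\nabla(\zeta^2(u-k)^\pm))$. Because $|\nabla\rho_1|^2+|\Delta\rho_1|\in L_p(Q)$, these terms are estimated like $g_3$ terms. After this reduction the boundary cylinders behave like interior ones with the same $\gamma,\kappa$. This is where the dependence on $\beta_3$ and $\|g_4\|_{W_p^{s_0,2s_0}(S)}$ enters. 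I expect this step, making the boundary term fit the $\mathcal{B}_2$ inequalities uniformly with constants of the claimed dependence, to be the main obstacle.

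Finally, we flatten $\Gamma\in C^2$ locally, so the boundary balls $B_\rho\cap G$ satisfy the cone/measure condition $(A)$ of \cite{lad}. We then apply Theorem 7.1 of Ch. 2 of \cite{lad} (for $\mathcal{B}_2$ functions up to $\Gamma_T$ in the Dirichlet case, and its Neumann analogue from Ch. 5, Sect. 7) to obtain the bound $\|u\|_{C^{\alpha_1,2\alpha_1}(\overline{Q})}\leq M_1$ with the stated dependence.
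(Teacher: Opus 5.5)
Your scheme is the paper's, except for one step: the boundary flux. Like you, the paper tests with $u^{(k)}\xi^2$, absorbs $\beta_1|\nabla u^{(k)}|^2u^{(k)}$ through the smallness $\max_{Q(\rho,\tau)}(u-k)\le\delta=\delta_0/(12\beta_1)$, and treats $g_3$ by H\"older using $p>(n+2)/2$. It moves the boundary integral into $Q$ via an auxiliary heat function with Neumann datum, and concludes with Theorem 8.2 and Remark 8.1 of Ch.~2 in \cite{lad}; the Dirichlet case is Theorem 1.1 of Ch.~5 there.

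The difference is the choice of Neumann datum. The paper uses \eqref{e201} to write $-\psi(t,x,u)u^{(k)}\le(g_4+\beta_3M)u^{(k)}$, whereas you use $\tilde g=\sup_{|v|\le M}|\psi(t,x,v)|$. Your objection to the one-sided route is correct. Condition \eqref{e201} bounds $-\psi$ from above only where $u>0$, so the paper's inequality is justified only for levels $k\ge 0$, and likewise for $-u$. The De Giorgi class, however, needs every level $k\ge\max_{Q(\rho,\tau)}w-\delta$ for $w=\pm u$, negative levels included.

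In fact the dependence claimed in the statement cannot hold at all. Take $n=1$, $G=(0,1)$, $p=2$ (so no compatibility condition is needed), $a_{11}=1$, $b=0$, $u_0\equiv 1$ and $\psi=Ku$. Then $\beta_3=0$, $g_4=0$ and $M=1$ for every $K>0$. Comparing with the half-line Robin solution gives $u(K^{-2},0)\le e\,\mathrm{erfc}(1)<0.43$. So the H\"older quotient in $t$ at $x=0$ is at least $0.57\,K^{2\alpha_1}\to\infty$.

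Your route therefore works, but its output is $M_1,\alpha_1$ depending on $\|\psi\|_{W_p^{s_0,2s_0}(S;C([-M,M]))}$, which is finite by the standing assumption stated before \eqref{e201}. The quantities $\beta_3$ and $\|g_4\|_{W_p^{s_0,2s_0}(S)}$ enter only through $M$ from Theorem~\ref{thr3}. Your sentence saying this step is ``where the dependence on $\beta_3$ and $\|g_4\|$ enters'' is wrong and should be replaced by this.

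Two small additions are needed.
\begin{itemize}
\item Justify $\tilde g\in W_p^{s_0,2s_0}(S)$. Since $0<s_0<2s_0<1$, the norm \eqref{ee} involves only first differences, and $v\mapsto\|v\|_{C([-M,M])}$ is $1$-Lipschitz. Hence $\|\tilde g\|_{W_p^{s_0,2s_0}(S)}\le\|\psi\|_{W_p^{s_0,2s_0}(S;C([-M,M]))}$.
\item For the boundary cylinders, rely on Theorem 8.2 and Remark 8.1 of Ch.~2 in \cite{lad}, not on Ch.~5, Sect.~7. The latter's hypotheses are stronger than what is available here.
\end{itemize}
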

\begin{proof}
The claim in the case of the problem  {\rm \eqref{e1}, \eqref{e2}} results from Theorem 1.1 of Ch. 5 in \cite{lad}. 
Unfortunately, similar result in the case of the problem 
 {\rm \eqref{e1}, \eqref{e3}} is proven in  \cite{lad} under more stringent conditions on the data  (see Theorem   7.1 of Ch. 5 \cite{lad}).
So we outline the proof which is in line with that in \cite{lad}. Let  $B_r(x_0)$ be a ball of radious  $r$ centered at  $x_0$
and $Q(\rho,\tau)=\{(t,x): x\in B_{\rho}(x_0), \ t_0<t<t_0+\tau\}$.  Fix  $(t_0,x_0)\in \overline{Q}$ and multiply the equation 
 $\eqref{e1}$ by $u^{(k)}\xi^2(t,x)$,  $u^{(k)}=\max(u-k,0)$, 
$k\geq \max_{x\in Q(\rho,\tau)\cap G} u_0(x)$,  
where  $\xi$ is an arbitrary smooth function with values between zero and 1 and vanishing for   $x\in {\mathbb R}^n \setminus  B_\rho(x_0)$ for every  $t$.
Integrating the equality obtained over   $B_{\rho}(x_0)=B_\rho$ and by parts, we infer  
 \begin{multline}\label{e23}
\frac{1}{2}\frac{\partial}{\partial t} \int_{B_\rho\cap G} \xi^2 (u^{(k)})^2\,dx +\int_{B_\rho\cap G} 
\sum_{i,j=1}^n a_{ij}u^{(k)}_{x_j}u^{(k)}_{x_i}\xi^2  + 2\sum_{i,j=1}^n a_{ij}u_{x_j}u^{(k)}\xi \xi_{x_i}+ \\
b(t,x,u,\nabla u) u^{(k)}\xi^2\,dx=-\int_{\Gamma\cap B_\rho}\xi^2 \psi(t,x,u) u^{(k)}\,d\Gamma +\int_{B_\rho} \xi\xi_t(u^{(k)})^2\,dx,  
\end{multline}
Let   $A_k(t)=\{x\in B_\rho\cap G: u(t,x)>k\}$, $L_k=\{(t,x)\in Q(\rho,\tau)\cap Q: u(t,x)>k\}$.   
The conditions of the theorem yield 
\begin{multline*}
\frac{1}{2}\frac{\partial}{\partial t} \int_{B_\rho\cap G}\xi^2 (u^{(k)})^2\,dx +
\int_{B_\rho\cap G} \delta_0\xi^2 |\nabla u^{(k)}|^2 \leq   
\int_{B_\rho\cap G} c_1|\nabla u^{(k)}| |\nabla\xi| \xi u^{(k)}\\ +\beta_1|\nabla u^{(k)}|^2 u^{(k)}\xi^2+g_3\xi^2  u^{(k)} \,dx
+ \int_{\Gamma\cap B_\rho}\xi^2 (g_4+\beta_3|u|)|u^{(k)}|\,d\Gamma + \int_{B_\rho\cap G} |\xi\xi_t|(u^{(k)})^2\,dx.
\end{multline*}
Using the inequality  $|\nabla u^{(k)}|\nabla\xi| \xi u^{(k)}\leq \varepsilon_1 |\nabla u^{(k)}|^2\xi^2+ c(\varepsilon_1)|\nabla \xi|^2( u^{(k)})^2$
for sufficiently small  $\varepsilon_1$, we validate the inequality 
\begin{multline*}
\frac{1}{2}\frac{\partial}{\partial t} \int_{B_\rho\cap G}\xi^2 (u^{(k)})^2\,dx +
\frac{2}{3} \int_{B_\rho\cap G} \delta_0\xi^2 |\nabla u^{(k)}|^2\,dx \leq 
\int_{B_\rho\cap G} \beta_1|\nabla u^{(k)}|^2 u^{(k)}\xi^2+g_3\xi^2  u^{(k)} \,dx \\
\int_{\Gamma\cap B_\rho}\xi^2 (g_4+\beta_3|u|)|u^{(k)}|\,d\Gamma   
 + \int_{B_\rho\cap G} c_1(u^{(k)})^2(|\nabla \xi|^2+\xi|\xi_t|) \,dx.
\end{multline*}
Integrating in $t$ from $t_0$ to $t\leq t_0+\tau$ we obtain the inequality 
\begin{multline}\label{s1}
\max_{t\in t_0,t_0+\tau} \int_{B_\rho\cap G}\xi^2 (u^{(k)}(t,x))^2\,dx +
\frac{2}{3} \int_{L_k} \delta_0\xi^2 |\nabla u^{(k)}|^2\,dxdt \leq \\
\int_{L_k} 2\beta_1|\nabla u^{(k)}|^2 u^{(k)}\xi^2+2g_3\xi^2  |u^{(k)}| \,dxdt +
2\int_{S\cap Q(\rho,\tau)}\xi^2 (g_4+\beta_3|u|)|u^{(k)}|\,dS \\
 + 2\int_{L_k} c_1(u^{(k)})^2(|\nabla \xi|^2+\xi|\xi_t|) \,dxdt +\int_{B_\rho\cap G}\xi^2 (u^{(k)}(t_0,x))^2\,dx.
\end{multline}
We have that  $(g_4+\beta_3|u|)\leq (g_4+\beta_3M)=\tilde{g}_4$.
Arguing as before (see the construction of the function $\rho_1$), 
we can  construct a function $\rho_2$ such that $\rho_{2t}-\Delta \rho_2=0,\  \frac{\partial \rho_2}{\partial \nu}|_S=\tilde{g}_4 $. In this case, we derive the equality 
$$
\int_{Q(\rho,\tau)} \Delta\rho_2 |u_k|\xi^2\,dxdt+ \int_{Q(\rho,\tau)} \nabla \rho_2\nabla   |u_k|\xi^2\,dxdt=\int_{S\cap Q(\rho,\tau)} \tilde{g}_4\xi^2 u^{(k)}\,dS
$$
which ensures  the inequality 
\begin{multline*}
\int_{S\cap Q(\rho,\tau)}\xi^2 (g_4+\beta_3|u|)|u^{(k)}\,dS\leq \int_{L_k}|\Delta \rho_2|\xi^2 |u^{(k)}|+ 2|\nabla\rho_2||\nabla \xi| \xi |u^{(k)}| +
\\ |\nabla\rho_2|\xi^2 |\nabla u^{(k)}|\, dxdt   \leq  \varepsilon \int_{L_k} \xi^2|\nabla u^{(k)}|^2  + 
c(\varepsilon) \int_{L_k}  |u^{(k)}|^2 |\nabla\xi|^2 + (|\Delta \rho_2|+|\nabla \rho_2|^2)\xi^2\,dxdt. 
\end{multline*}
The H\"{o}lder inequality yields 
$$
c(\varepsilon) \int_{L_k} (|\Delta \rho_2|+|\nabla \rho_2|^2)\xi^2\,dxdt\leq c(\int_{L_k}\xi\,dxdt )^{1/p'}. 
$$
So we can write out the estimate
\begin{multline}\label{s2}
\int_{S\cap Q(\rho,\tau)}\xi^2 (g_4+\beta_3|u|)|u^{(k)}|\,dS\leq  \varepsilon \int_{L_k} \xi^2|\nabla u^{(k)}|^2 + \\
c(\varepsilon) \int_{L_k}  |u^{(k)}|^2 |\nabla\xi|^2\,dxdt + c_1(\varepsilon)(\int_{L_k}\xi\,dxdt )^{1/p'}.
\end{multline}
Similarly, we infer 
\begin{equation}\label{s3}
\int_{L_k} 2g_3\xi^2  u^{(k)} \,dxdt\leq c_2(M)  (\int_{L_k}\xi\,dxdt )^{1/p'}
\end{equation} 
Using  \eqref{s2},   \eqref{s3}, and choosing small  $\varepsilon$, we can rewrite   \eqref{s1} in the form 
\begin{multline}\label{s4}
\max_{t\in t_0,t_0+\tau} \int_{B_\rho\cap G}\xi^2 (u^{(k)})^2\,dx +
\frac{1}{3} \int_{L_k} \delta_0\xi^2 |\nabla u^{(k)}|^2\,dxdt \leq 
\int_{L_k} 2\beta_1|\nabla u^{(k)}|^2 u^{(k)}\xi^2\,dxdt \\
+ c_3\int_{L_k} c_1(u^{(k)})^2(|\nabla \xi|^2+\xi|\xi_t|) \,dxdt + c_4(\int_{L_k}\xi\,dxdt )^{1/p'}+\int_{B_\rho\cap G}\xi^2 (u^{(k)}(t_0,x))^2\,dx.
\end{multline}
In what follows,   we assume that   $k\geq \max_{x\in Q(\rho,\tau)\cap G}u(t,x)-\delta$, where $\delta=\delta_0/(12\beta_1)$. This assumption proves that
\begin{multline*}
\max_{t\in t_0,t_0+\tau} \int_{B_\rho\cap G}\xi^2 (u^{(k)})^2\,dx +
\frac{1}{6} \int_{L_k} \delta_0\xi^2 |\nabla u^{(k)}|^2\,dxdt \leq \\
+ c_3\int_{L_k} c_1(u^{(k)})^2(|\nabla \xi|^2+\xi|\xi_t|) \,dxdt + c_4(\int_{L_k}\xi\,dxdt )^{1/p'}+\int_{B_\rho\cap G}\xi^2 (u^{(k)}(t_0,x))^2\,dx.
\end{multline*}
where $k\geq \max_{x\in Q(\rho,\tau)\cap G}u(t,x)-\delta$,  $k\geq \max_{x\in Q(\rho,\tau)\cap G} u_0(x)$. Similar inequality is derived for 
the function  $(-u)^{(k)}$.
The conditions of Theorem 8.2 of Ch. 2 (see Remark  8.1 after this theorem)  in \cite{lad} are fulfilled and, thus, the theorem is proven. 
\end{proof}

\begin{lemma} \label{lem3} Let the condition  {\rm \eqref{e2051}-\eqref{e2053}} be fulfilled.
 If   $u\in W_p^{s_1,2s_1}(S)$ $(s_1=1-1/2p)$ and
  $\|u\|_{L_\infty(S)}=M$, then there exists  $\varepsilon>0$ such that 
  \begin{equation}\label{in1}
\|\psi(t,x,u)\|_{{W}^{s_0,2s_0}_p(S)}\leq   c_1(M)+ c_2(M)\|u\|_{{W}^{s_1-\varepsilon/2,2s_1-\varepsilon}_p(S)},
\end{equation}
 If   $u\in W_p^{1,2}(Q)$ and 
  $\|u\|_{L_\infty(S)}=M$, then there exists  $\varepsilon>0$ such that 
\begin{equation}\label{in2}
  \|a_{ij}u_{x_j}\|_{W_p^{s_0,2s_0}(S)}\leq  c_0\|a_{ij}\|_{L_\infty(S)} \|u\|_{W_p^{1, 2}(Q)}+   
 c_4(M)\|u\|_{W_p^{1-\varepsilon/2, 2-\varepsilon}(Q)},\  u\in W_p^{1,2}(Q),
   \end{equation}
   for all  $i,j$.
   \end{lemma}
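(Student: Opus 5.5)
We work with the intrinsic (Slobodetskii-type) norms of the anisotropic spaces on $S$. Since $W_p^{s_0,2s_0}(S)=W_p^{s_0}(0,T;L_p(\Gamma))\cap L_p(0,T;W_p^{2s_0}(\Gamma))$ with $2s_0=1-1/p<1$ and $s_0<1$, both components can be normed by the $L_p$ norm plus first-order difference seminorms:
\[
\int_0^T\!\!\int_0^T\frac{\|v(t)-v(\tau)\|^p_{L_p(\Gamma)}}{|t-\tau|^{1+ps_0}}\,dt\,d\tau,
\qquad
\int_0^T\!\!\int_\Gamma\!\int_\Gamma\frac{|v(t,x)-v(t,y)|^p}{|x-y|^{n-1+2ps_0}}\,dx\,dy\,dt .
\]
Each estimate then reduces to pointwise bounds on increments of the composed function.

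\textbf{Proof of \eqref{in1}.} Write
\[
\psi(t,x,u(t,x))-\psi(\tau,x,u(\tau,x))=\bigl[\psi(t,x,u(t,x))-\psi(\tau,x,u(t,x))\bigr]+\bigl[\psi(\tau,x,u(t,x))-\psi(\tau,x,u(\tau,x))\bigr].
\]
The first bracket is dominated by $\|\psi(t,x,\cdot)-\psi(\tau,x,\cdot)\|_{C([-M,M])}$. Its contribution to the seminorm is therefore bounded by the norm of $\psi$ in $W_p^{s_0,2s_0}(S;C([-M,M]))$, which is part of $c_1(M)$.

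The second bracket is bounded by $h(\tau,x)\,|u(t,x)-u(\tau,x)|$, where $h=\sup_{|u|\le M}|\psi_u|\in L_{q_4}(S)$. Apply H\"older's inequality with exponents $q_4/p$ and its conjugate. This shifts the integrability of the $u$-increment to $L_r$ with $1/r=1/p-1/q_4$. The resulting quantity is a Besov-type difference seminorm of $u$ of order $s_0$ in $L_r$. Since $q_4>n+1=\dim S$, the embedding $W_p^{s_1-\varepsilon/2,2s_1-\varepsilon}(S)\subset W_r^{s_0,2s_0}(S)$ holds for small $\varepsilon>0$. Indeed, the smoothness gap is $s_1-s_0=1/2$ in $t$ (order $1$ in $x$), while the anisotropic embedding loss is $(n+1)/(2q_4)$ in parabolic units, which is $<1/2$. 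The $x$-differences are handled the same way. This gives the term $c_2(M)\|u\|$.

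One difficulty is that $h$ depends on the point $(\tau,x)$ while the increment involves two points. To handle this, we will integrate the difference quotient in the form $\int\!\!\int |\Delta_\eta u|^p h^p$ and use H\"older's inequality in the outer variables; the double integral still factorizes suitably.

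\textbf{Proof of \eqref{in2}.} Write $a_{ij}(t,x,u)u_{x_j}$ on $S$. We use the following product estimate in $W_p^{s_0,2s_0}(S)$:
\[
\|fg\|\le c\|f\|_{L_\infty}\|g\|_{W_p^{s_0,2s_0}}+\|g\|_{L_r}[f]_{W_{q}^{s_0,2s_0}},
\]
obtained by splitting the increment of $fg$ as $f\,\Delta g+g\,\Delta f$. Take $g=u_{x_j}|_S$. By the trace theorem,
\[
\|g\|_{W_p^{s_0,2s_0}(S)}\le c_0\|u\|_{W_p^{1,2}(Q)},
\]
which yields the first term of \eqref{in2}.

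For the second term, the increments of $f=a_{ij}(\cdot,u)$ split exactly as in \eqref{in1}. We use \eqref{e2053} with $q_5$ for the explicit dependence on $(t,x)$, and $a_{iju}\in L_\infty$ together with the H\"older continuity of $u$ for the dependence through $u$. Because $(n+1)/q_5<2s_0$, the trace $u_{x_j}|_S$ needs to lie only in $L_r$ with $r$ finite but large. This is controlled by the trace of a space strictly weaker than $W_p^{1,2}(Q)$, namely $W_p^{1-\varepsilon/2,2-\varepsilon}(Q)$, since $p>(n+2)/2$ leaves room in the embedding.

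\textbf{Main obstacle.} The main obstacle is the bookkeeping of exponents. We need to check that the H\"older splits with $q_4,q_5>n+1$ and $p>(n+2)/2$ leave a positive margin $\varepsilon$ in all anisotropic embeddings simultaneously, in both the $t$- and $x$-difference seminorms. The $t$-direction for \eqref{in2} is the most delicate, since the trace of $u_x$ has exactly the regularity $s_0$ there.
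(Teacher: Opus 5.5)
There is a genuine gap, and it sits in the term of \eqref{in2} that comes from the dependence of $a_{ij}$ on $u$ (the paper's $I_2$). The rest of your plan is sound and matches the paper: \eqref{in1} via the mean-value formula, the $\|a_{ij}\|_{L_\infty}$-times-trace term (the paper's $I_1$), and the explicit $(t,x)$-dependence handled by H\"older with $q_5$ (the paper's $I_3$). After the Lipschitz bound in $u$, your product estimate leaves the bilinear quantity
\[
\Bigl(\int_0^\delta \tau^{-1-s_0p}\bigl\|\,|\Delta_{\tau,t}u|\,|u_{x_j}|\,\bigr\|_{L_p(S)}^p\,d\tau\Bigr)^{1/p}\le \|u_{x_j}\|_{L_q(S)}\,\|u\|_{B^{s_0,2s_0}_{qp/(q-p),p}(S)} .
\]
Nothing in your proposal makes this linear in $\|u\|_{W_p^{1-\varepsilon/2,2-\varepsilon}(Q)}$. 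If you bound each factor by that norm, you get a quadratic bound. That is not \eqref{in2}, and it would be useless for the absorption step in Theorem \ref{thr5}. If ``H\"older continuity of $u$'' is meant pointwise, $|\Delta_{\tau,t}u|\le M_1\tau^{\alpha_1}$, you are left with the weight $\tau^{-1-s_0p+\alpha_1p}$. That weight is integrable only if $\alpha_1>s_0=\frac12-\frac1{2p}$ (and $2\alpha_1>2s_0$ in $x$), which the small De Giorgi-type exponent of Theorem \ref{thr4} does not provide.

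The missing ingredient is a multiplicative Gagliardo--Nirenberg interpolation, namely \eqref{in5} with $\theta=1/2$. After the trace theorem, each factor is bounded by
\[
c\|u\|_{W_p^{1-\varepsilon/2,2-\varepsilon}(Q)}^{1/2}\,\|u\|_{B^{\tilde s_0/2,\tilde s_0}_{\infty,\infty}(Q)}^{1/2},\qquad \tilde s_0\le\alpha_1 .
\]
The exponent $q$ must be chosen so that the scaling relation in \eqref{in5} holds with $\theta=1/2$ for both factors at once; the paper takes $q=2p(n+1)/(n+2)$. The $B_{\infty,\infty}$ norm is controlled by the a priori H\"older bound of Theorem \ref{thr4}. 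The two square roots of $\|u\|_{W_p^{1-\varepsilon/2,2-\varepsilon}(Q)}$ then multiply to a linear bound, which is \eqref{in10}.

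For the same reason, your ``main obstacle'' is misplaced. The term where $u_{x_j}|_S$ has exactly regularity $s_0$ is the easy one, and you dispose of it by the trace theorem yourself. The real difficulty is this bilinear cross term and the choice of exponents that lets both factors interpolate with weight $1/2$ against the H\"older bound.
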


\begin{proof}  The proof of the estimate   \eqref{in1} is essentially simpler than that of \eqref{in2}. It relies on the equality
 $$
\varphi(t,x,v_1)-\varphi(t,x,v_2) =\int_0^1\varphi_v(t,x,v_2+\xi(v_1-v_2))\,d\xi(v_1-v_2).
$$
Hence, we proceed with the proof of  \eqref{in2}. 
We employ the definition of the norm  \cite[Subsect. 4.4.1]{tri}, \cite{ama}.
The norm in  $W_p^{s_0,2s_0}(S)$ is defined with the use of 
partition of unity and straightening of the boundary  $\Gamma$. 
As a result, we can prove the estimate in the simplest case of 
$S=(0,\infty)\times \Gamma$,  $\Gamma={\mathbb R}^{n-1}$, $Q=(0,\infty)\times {\mathbb R}^n_+$, ${\mathbb R}^n_+=\{x\in {\mathbb R}^n: \  x_n>0\}$. 
The norm in this case is defined by the equality  (see \cite[Sect. 4.4.2]{tri})
\begin{multline}\label{ee}
\|v(t,x)\|_{W_p^{s_0, 2s_0}(S)}^p=\|v\|_{L_p(S)}^p+  \int_{0}^{\delta} \frac{1}{|\tau|^{1+s_0p}} 
 \|\Delta_{\tau,t} v\|_{L_{p}(S)}^p\,d\tau +\\  \int_{|h|\leq \delta} \frac{1}{|h|^{n-1+2s_0p}} 
 \|\Delta_{h,x} v\|_{L_{p}(S)}^p\,dh=\|v\|_{L_p(S)}^p+ J_1^p(\Delta_{\tau,t} v)+ J_2^p(\Delta_{h,x} v)    ,\ \delta>0,
\end{multline}
where $\Delta_{\tau,t} v=v(t+\tau,x)-v(t,x)$, $\Delta_{h,x} v=v(t,x+h)-v(t,x)$.
Consider the second summand with $v=a_{ij}(t,x,u)u_{x_j}$ assuming that this function is compactly supported.  
We have the equality  $\Delta_{\tau,t}v=a_{ij}(t+\tau,x,u(t+\tau,x))\Delta_{\tau,t}u_{x_j}+(a_{ij}(t+\tau,x, u(t+\tau,x))-a_{ij}(t+\tau,x, u(t,x))) u_{x_j}(t,x)+
 (a_{ij}(t+\tau,x,u(t,x))-a_{ij}(t,x,u(t,x))) u_{x_j}(t,x)=I_1+I_2+I_3$.
The triangle inequality implies that 
$$
J_1(\Delta_{\tau,t} a_{ij} u_{x_j})\leq J_1(I_1)+ J_1(I_2)+J_1(I_3).
$$
Estimate every summand. We have  (see Corollary  1.3 in \cite{pya11})
\begin{multline}\label{in3}
J_1(I_1)\leq \|a_{ij}(t+\tau,x,u(t+\tau,x))\|_{L_\infty(S)}\|u_{x_j}\|_{W_p^{s_0}(0,\infty;L_p(\Gamma))}\leq \\
c_0\|a_{ij}(t+\tau,x,u(t+\tau,x))\|_{L_\infty(S)}\|u\|_{W_p^{1,2}(Q)}.
\end{multline}
Next the H\"{o}lder inequality yields 
\begin{multline}\label{in4}
J_1(I_3)\leq J_1((a_{ij}(t+\tau,x,u(t,x))-a_{ij}(t,x,u(t,x))) u_{x_j}(t,x)\|)\leq \\
 \Bigl(\int_0^\delta \frac{1}{|\tau|^{1+s_0p}}\|u_{x_j}\|_{L_{q_5p/(q_5-p)}(S)}^p\|(a_{ij}(t+\tau,x,u)-a_{ij}(t,x,u)\|_{L_{q_5}(S;C([-M,M]))}^p\,d\tau\Bigr)^{1/p}\leq \\
 \|a_{ij}(t,x,u)\|_{B_{q_5 p}^{s_0,2s_0} (S;C([-M,M]))} \|u_{x_j}\|_{L_{q_5p/(q_5-p)}(S)}\leq c_1(M)\|u_{x_j}\|_{L_{q_5p/(q_5-p)}(S)}\leq \\
 c_2(M) \|u\|_{W_{q_5p/(q_5-p)}^{1/2+\varepsilon/2+1/2\tilde{q}, 1+\varepsilon+1/\tilde{q}}(Q)}^p\leq   \|u\|_{W_{p}^{1-\varepsilon_1, 2-2\varepsilon_1}(Q)}^p,\ \tilde{q}=q_5p/(q_5-p),
 \end{multline}
 where $\varepsilon_1>0$,  $2\varepsilon_1\leq 1-1/p -(n+1)/q_5-\varepsilon$, $0<\varepsilon<1-1/p -(n+1)/q_5$ 
 and we employ Corollary 1.3 in \cite{pya11} and the embedding 
 $W_{p}^{1-\varepsilon_1, 2-2\varepsilon_1}(Q)\subset W_{q_5p/(q_5-p)}^{1/2+\varepsilon/2+1/2\tilde{q},1+\varepsilon+1/\tilde{q}}(Q)$ (Corollary  5.6.4 of Ch. 7 in \cite{ama}). The constant $\varepsilon$ can be chosen to be arbitrarily small. 
Estimate the summand 
$J_1(I_2)$. We use here the inequality of the Gagliardo-Nirenberg type  (Theorem  5.7.1 of Ch. 7 in \cite{ama})
\begin{equation}\label{in5}
\|u\|_{B_{q,q}^{\tilde{s}/2,\tilde{s}}(Q)}\leq c\|u\|_{B_{p,p}^{\tilde{s}_1/2,\tilde{s}_1}(Q)}^{\theta} \|u\|_{B_{\infty,\infty}^{\tilde{s}_0/2,\tilde{s}_0}(Q)}^{1-\theta},
\end{equation}
where   $\tilde{s}-(n+2)/q=(1-\theta)\tilde{s}_0+\theta(\tilde{s}_1-(n+2)/p)$, $(\tilde{s}-\tilde{s}_0)/(\tilde{s}_1-\tilde{s}_0)\leq \theta\leq 1$, $-\infty<\tilde{s}_0<\tilde{s}<\tilde{s}_1<\infty$, 
$1\leq q,p\leq \infty$, $\theta>0$. 
 Since  the function  $a_{ij}$ meets  the Lipschitz condition  in the variable $u$, we infer
 \begin{multline}\label{in6}
J_1(I_2)\leq c(M) J_1(|\Delta_{\tau,t}u(t,x)| |u_{x_j}(t,x)|)\leq  c(M)\|u_{x_j}\|_{L_{q}(S)} 
\|u\|_{B_{qp/(q-p),p}^{{s}_0,2{s}_0}(S)}\leq  \\
c_1(M)\|u_{x_j}\|_{W_{q}^{(1+1/q+\varepsilon)/2,1+1/q+\varepsilon}(Q)} 
\|u\|_{W_{qp/(q-p)}^{{s}_0+(q-p)/2qp+\varepsilon/2,2{s}_0+(q-p)/qp+ \varepsilon}(Q)},
\end{multline}
where  $\varepsilon>0$ is a small parameter, we use the H\"{o}lder inequality, Corollary  1.3 in \cite{pya11}, and 
the embedding  $B_{q,q}^{s+\varepsilon}\subset B_{q,p}^s$ valid for every  $\varepsilon>0$ and $p\in [1,\infty]$.
  To estimate the factors in \eqref{in6}, we  use  \eqref{in5}. 
 Take $\tilde{s}=1+1/q+\varepsilon$, $\tilde{s}_0=3\varepsilon\leq \alpha_1$ (see Theorem  \ref{thr4}),
 $\tilde{s}_1=2-\varepsilon$, $\theta=1/2$.  We can conclude that 
  \begin{equation}\label{in7}
 \|u_{x_j}\|_{W_{q}^{1/2+1/2q+\varepsilon/2,1+1/q+\varepsilon}(G)}\leq c \|u\|_{W_p^{1-\varepsilon/2,2-\varepsilon}(Q)}^{1/2}
 \|u\|_{B_{\infty,\infty}^{\tilde{s}_0/2, \tilde{s}_0}(Q)}^{1/2},
 \end{equation}
 where $q=\frac{2p(n+1)}{(n+2)}, $ and, thereby, 
 $
 1+\frac{1}{q}+\varepsilon-\frac{n+2}{q}= \frac{\tilde{s}_0}{2}+\frac{1}{2} (2-\varepsilon-(n+2)/p $.
 Now take  $\tilde{s}=1-1/2q=s_0+(q-p)/pq$, $\tilde{s}_0=2\varepsilon$,
 $\tilde{s}_1=2-\varepsilon$. For this choice of  $q$, it follows from  \eqref{in5} that 
\begin{equation}\label{in8}
 \|u\|_{W_{qp/(q-p)}^{1/2-1/2q+\varepsilon/2,1-1/q+\varepsilon}(Q)}\leq c \|u\|_{W_p^{1-\varepsilon/2,2-\varepsilon}(Q)}^{1/2}
 \|u\|_{B_{\infty,\infty}^{\tilde{s}_0/2, \tilde{s}_0}(Q)}^{1/2},
 \end{equation}
 where  $1-1/q-(n+2)(1/p-1/q)= \tilde{s}_0/2+ (2-\varepsilon-(n+2)/p)/2. $
 In this case the inequality  \eqref{in6} can be written in the form 
 \begin{equation}\label{in10}
J_1(I_2)\leq C(M) \|u\|_{W_p^{1-\varepsilon/2,2-\varepsilon}(Q)}.  
\end{equation}
The estimate of the expression   $J_2^p(\Delta_{h,x} v)$ is proven by analogy.  
\end{proof}

\begin{remark}  if  $p>n+2$ then it is possible to replace the conditions  \eqref{e2051}-\eqref{e2053} with more natural conditions 
\begin{equation}\label{e251}
a_{ijx_i}\in L_{p}(Q;C([-R,R])), \ a_{iju}\in L_\infty(Q;C([-R,R])) \  \forall R>0, \ i,j=1,2,\ldots,n, 
\end{equation}
\begin{equation}\label{e25}
a_{ij},\psi\in W_p^{s_0,2s_0}(S;C([-R,R])),   \ \psi_u\in L_{q_4}(S;C([-R,R])), \  \ q_4> n+1.
\end{equation}
\end{remark}

\begin{theorem} \label{thr5}
 Assume that  $u\in W_p^{1,2}(Q)$ ($p>(n+2)/2$) is a solution  to the problem  {\rm \eqref{e1}, \eqref{e2}} or the problem  {\rm \eqref{e1}, \eqref{e3}}, and the conditions 
   {\rm \eqref{e6}-\eqref{e8}, \eqref{e2051}- \eqref{e2053}} hold.    
Then there exists a constant  $M_2>0$ depending on the constants  $M, M_1$ in Theorems  {\rm   \ref{thr2}-\ref{thr4}} such that 
\begin{equation}\label{e259}
\|u\|_{W_p^{1,2}(Q)} \leq M_2.
\end{equation}
 \end{theorem}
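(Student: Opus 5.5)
We treat $u$ as the solution of a linear problem with frozen coefficients and combine the linear estimate of Theorem \ref{thr0} with the bounds $\|u\|_{L_\infty(Q)}\le M$ and $\|u\|_{C^{\alpha_1,2\alpha_1}(\overline{Q})}\le M_1$ from Theorems \ref{thr2}--\ref{thr4}. Freeze the coefficient as $\tilde a_{ij}(t,x)=a_{ij}(t,x,u(t,x))$ and expand the divergence:
$$
\partial_{x_i}(a_{ij}(t,x,u)u_{x_j})=\tilde a_{ij}u_{x_ix_j}+a_{ijx_i}(t,x,u)u_{x_j}+a_{iju}(t,x,u)u_{x_i}u_{x_j}.
$$
Then $u$ solves the linear equation \eqref{a1} with leading coefficients $\tilde a_{ij}$, and all other terms go into the right-hand side $f=-b(t,x,u,\nabla u)+a_{ijx_i}(t,x,u)u_{x_j}+a_{iju}(t,x,u)u_{x_i}u_{x_j}$.

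The coefficient $\tilde a_{ij}$ is continuous on $\overline{Q}$, with modulus of continuity controlled by $M,M_1$ and the continuity of $a_{ij}$ on compacts. It satisfies \eqref{a5} by \eqref{e6}, and its divergence-part derivatives lie in $L_{q_3}$ by \eqref{e2051}. Hence Theorem \ref{thr0} applies, with $C_0$ depending only on these quantities. In the Neumann case we also need the boundary datum. We write the condition in the form \eqref{a3} with $\sigma=0$ and $g=-\psi(t,x,u)$, and use \eqref{e2053} to get $\tilde a_{ij}\in$ the class \eqref{a502}. The last step needs the trace $u|_S\in W_p^{s_1,2s_1}(S)$ (note $s_1-\varepsilon/2$ smoothness suffices) and composition estimates like Lemma \ref{lem3}.

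Applying \eqref{a6}, we have to bound $\|f\|_{L_p(Q)}$ and $\|\psi(\cdot,u)\|_{W_p^{s_0,2s_0}(S)}$ by $\theta\|u\|_{W_p^{1,2}}$ plus terms in a weaker norm. By \eqref{e8} and \eqref{e2051},
$$
|f|\le c(M)\bigl(|\nabla u|^2+|a_{ijx_i}||\nabla u|+g_3\bigr).
$$
The term $\|a_{ijx_i}\nabla u\|_{L_p}$ is handled by Hölder with $q_3>n+2$: $\|\nabla u\|_{L_{pq_3/(q_3-p)}}$ is controlled by $\|u\|_{W_p^{1-\varepsilon/2,2-\varepsilon}(Q)}$ via embedding. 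The quadratic term is the main obstacle. I would use the Gagliardo--Nirenberg inequality \eqref{in5} with $\tilde s_0=\alpha_1$ small (Hölder norm bounded by $M_1$), $\tilde s=1$, $q=2p$, $\tilde s_1=2$. This should give
$$
\|\nabla u\|_{L_{2p}}^2\le c\|u\|_{W_p^{1,2}}^{\theta'}M_1^{2-\theta'}
$$
with an exponent $\theta'<1$ (possible precisely because $\alpha_1>0$), so Young's inequality yields $\le\frac{1}{4C_0}\|u\|_{W_p^{1,2}}+c(M_1)$. For the boundary term, \eqref{in1} gives $c_1(M)+c_2(M)\|u\|_{W_p^{1-\varepsilon/2,2-\varepsilon}(Q)}$ after the trace theorem.

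Finally, we use the interpolation inequality
$$
\|u\|_{W_p^{1-\varepsilon/2,2-\varepsilon}(Q)}\le\eta\|u\|_{W_p^{1,2}(Q)}+c(\eta)\|u\|_{L_p(Q)},
$$
where $\|u\|_{L_p(Q)}\le M|Q|^{1/p}$. Choosing $\eta$ small and absorbing into the left-hand side gives \eqref{e259}. A subtlety is that $C_0$ from Theorem \ref{thr0} depends on $\tilde a_{ij}$ only through the modulus of continuity and the norms listed, which are bounded in terms of $M,M_1$; for the Neumann case, the $B^{s_0}$-type norms of $\tilde a_{ij}$ on $S$ need the same composition argument as \eqref{in3}--\eqref{in10}. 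If this creates circularity (that norm involves $u$ itself), I would instead apply Theorem \ref{thr0} locally on small cylinders with coefficients frozen at a point, using partition of unity. The differences $\tilde a_{ij}-\tilde a_{ij}(t_0,x_0)$ are then small in sup norm by the Hölder bound, and $\|(\tilde a_{ij}-\tilde a_{ij}(t_0,x_0))u_{x_j}\|_{W_p^{s_0,2s_0}}$ is estimated by \eqref{in2}, which is exactly tailored for this absorption.
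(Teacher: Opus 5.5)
Your main route, a single global application of Theorem~\ref{thr0} with $\tilde a_{ij}(t,x)=a_{ij}(t,x,u(t,x))$, does not close. The circularity is not confined to the boundary norms you flagged.

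\textbf{The interior gap.} Theorem~\ref{thr0} is a divergence-form result. With $a_{ij}:=\tilde a_{ij}$, condition \eqref{a501} is about the total derivative $\partial_{x_i}\tilde a_{ij}=a_{ijx_i}(t,x,u)+a_{iju}(t,x,u)u_{x_i}$. Its $L_{q_2}$-norm ($q_2>n+2$) contains $\|\nabla u\|_{L_{q_2}(Q)}$, which is controlled only in terms of $\|u\|_{W_p^{1,2}(Q)}$ itself, not by $M,M_1$. Since the dependence of $C_0$ on that norm is unspecified, nothing can be absorbed. Your remark that the divergence-part derivatives lie in $L_{q_3}$ by \eqref{e2051} conflates the partial derivative $a_{ijx_i}(t,x,u)$ with $\partial_{x_i}[a_{ij}(t,x,u(t,x))]$. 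Moving $a_{ijx_i}u_{x_j}+a_{iju}u_{x_i}u_{x_j}$ to the right-hand side does not help either: the remaining operator $u_t-\tilde a_{ij}u_{x_ix_j}$ is not of the form \eqref{a1}, and rewriting it in that form brings $\partial_{x_i}\tilde a_{ij}$ back.

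For the Dirichlet problem you could instead quote a nondivergence $W_p^{1,2}$-estimate whose constant depends only on $\delta_0$ and the modulus of continuity of the leading coefficients. That result is classical, but it is not Theorem~\ref{thr0}. For the Neumann problem there is no such shortcut. The norms in \eqref{a502} of $\tilde a_{ij}|_S$ require Besov regularity of $u|_S$ of order $s_0$ in $t$ and $2s_0$ in $x$, which $M,M_1$ do not control in general.

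\textbf{The fallback.} Your fallback is the correct argument and is precisely the paper's proof, so it should be the proof in both cases rather than a contingency. Concretely:
\begin{itemize}
\item Cover $Q$ by cylinders $Q_{kl}$ whose size $h$ depends only on $M_1$ and the modulus of continuity of $a_{ij}$ on $\overline Q\times[-M,M]$, and take a partition of unity $\varphi_{kl}$.
\item Apply Theorem~\ref{thr0} to $w=\varphi_{kl}u$ with the constant coefficients $a_{ij}(kh,x_l,u(kh,x_l))$. Then $C_0$ is uniform in $k,l$ and independent of $u$.
\item The term $(a_{ij}(kh,x_l,u(kh,x_l))-a_{ij}(t,x,u))w_{x_ix_j}$ costs $c\varepsilon\|w\|_{W_p^{1,2}(Q)}$, and its conormal counterpart is handled by \eqref{in2}.
\item Everything involving $a_{ijx_i}$, $a_{iju}$, $b$, $\psi$ and derivatives of $\varphi_{kl}$ is genuinely right-hand side. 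This is what removes the divergence issue.
\end{itemize}
Your bounds for these terms, and the final interpolation and absorption, then go through as in the paper.

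\textbf{A correction in the quadratic term.} With $\tilde s=1$, $q=2p$, $\tilde s_1=2$, $\tilde s_0>0$, the exact balance relation in \eqref{in5} forces $\theta<(\tilde s-\tilde s_0)/(\tilde s_1-\tilde s_0)$, which is inadmissible. You should instead take $\theta\in\bigl((1-\tilde s_0)/(2-\tilde s_0),1/2\bigr)$. This interval is nonempty precisely because $\tilde s_0>0$. Then use the boundedness of $Q$ to lower the integrability to $2p$. This gives your exponent $2\theta<1$. The paper instead uses a spatial interpolation inequality that bounds $\|\nabla u\|_{L_{2p}}^2$ by $c(M_1)$ times a strictly lower-order norm.
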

 
 \begin{proof}  We present the proof in the case of the problem  \eqref{e1}, \eqref{e3}. The problem  \eqref{e1}, \eqref{e2} is simpler and the arguments are the same.
   Let  $N\in {\mathbb N}$ and $h=T/N$. Construct a finite covering of $\Gamma$ by balls  $B_h(x_i)$ of radius  $h$, the parameter $h$ is chosen below. 
 Let us build it up to a covering of $G$. 
Next, we can construct the covering of  $Q$ by domains of the form  $Q_{kl}=((k-5/4)h,(k+1/4)h)\times B_h(x_l)$ ($k=1,2,\ldots, N$). 
Construct also the corresponding partition of unity on $Q$:  $\varphi_{kl}\in C_0^{\infty}(Q_{kl})$.
Without loss of generality we assume that 
$\sum_{k,l: x_l\in \Gamma}\varphi_{kl}(t,x)=1$ for all  $x\in \Gamma, t\in [0,T]$. 
Since the function  $u$ satisfies the estimate from Theorem \ref{thr5} and the functions  $a_{ij}$ are continuous in all variables, for every  
  $\varepsilon>0$, there exists 
$h=T/N>0$ such that  $|a_{ij}(t,x,u(t,x))-a_{ij}(kh,x_l,u(kh,x_l))|<\varepsilon$ for all  $(t,x)\in Q_{kl}$ and all $i,j$, where the constant 
 $h$ depends on  $M_1$ but it is independent of the function $u$ itself.  
Consider the family of problems 
   \begin{equation}\label{a26}
	Mv= v_t-\sum_{i,j=1}^n a_{ij}(kh,x_l,u_{kl})v_{x_jx_i}=f,\
\end{equation}
 \begin{equation}\label{a27}
	v|_{t=0}=v_0,\ \ v|_{S}= g(t,x), 
\end{equation}
or
\begin{equation}\label{a28}
	v|_{t=0}=u_0,\ \ \sum_{i,j=1}^n a_{ij}(kh,x_l,u_{kl})v_{x_j}\nu_i|_{S}=g(t,x),
\end{equation}
where $u_{kl}\in [-M,M]$ is a collection of constants and $M$  is the constant defined in Theorems  \ref{thr2}, \ref{thr3}.
We suppose that 
\begin{equation}\label{a29}
v_0\in W_p^{2-2/p}(G),\  g\in W_p^{k_0,2k_0}, \  f\in L_p(Q), \  \Gamma\in C^2,
\end{equation}
where $k_0=s_1$ in the case of the Dirichlet boundary condition and  $k_0=s_0$ otherwise.  
Applying Theorem  \ref{thr0} we can say that there exist unique  solutions to the problems   \eqref{a26}, \eqref{a27} and \eqref{a26}, \eqref{a28}
and the estimates 
\begin{equation}\label{a30}
 \|v\|_{W_{p}^{1,2}(Q)}\leq
C_{0}(\|u_{0}\|_{W_{p}^{2-2/p}(G)}+
\|f\|_{L_{p}(Q)}+\|g\|_{W_{p}^{k_{0},2k_{0}}(S)})
\end{equation}
hold, where, without loss of generality, we can assume that 
  $C_0$ is indepedent of  $k,l$.
Multiply the equation  \eqref{e1}  and the boundary conditions  \eqref{e3} by $\varphi_{kl}$. The function  $w=u\varphi_{kl}$ is a solution to the problem 
   \begin{multline}\label{e26}
	Mw= w_t-\sum_{i,j=1}^n a_{ij}(t,x,u)w_{x_jx_i}+  \tilde{b}_{kl}=0,\  \tilde{b}_{kl}= -\varphi_{kl}(\sum_{i,j=1}^n  (a_{ijx_i}u_{x_j}+a_{iju}u_{x_j}u_{x_i})- \\  b(x,t,u,\nabla u))  + \sum_{i,j=1}^n  a_{ij}u_{x_j}\varphi_{kl x_i}+ \sum_{i,j=1}^n  a_{ij}u_{x_i}\varphi_{klx_j}+\sum_{i,j=1}^n  a_{ij}u\varphi_{klx_ix_j}, 
\end{multline}
 \begin{equation}\label{e27}
	\sum_{i,j=1}^n a_{ij}(t,x,u)w_{x_j}\nu_i- \sum_{i,j=1}^n a_{ij}(t,x,u)u\varphi _{kl x_j}\nu_i+ \varphi_{kl} \psi(t,x,u)=0, \   \  w(0,x)=\varphi_{kl}u_0.     
\end{equation}
 Rewrite this problem in the form 
   \begin{multline}\label{e28}
	Mw= w_t-\sum_{i,j=1}^n a_{ij}(kh,x_l,u(k h,x_l))w_{x_jx_i}=-\sum_{i,j=1}^n (a_{ij}(kh,x_l,u(kh,x_l))- 
\\ a_{ij}(t,x,u(t,x))) w_{x_jx_i})- \tilde{b}_{kl}=\tilde{f}_{kl},\   w(0,x)=\varphi_{kl}u_0.     
\end{multline}
 \begin{multline}\label{e29}
	\sum_{i,j=1}^n a_{ij}(kh,x_l,u(kh,x_l))w_{x_j}\nu_i=\sum_{i,j=1}^n (a_{ij}(kh,x_l,u(kh,x_l))-a_{ij}(t,x,u(t,x)))w_{x_j}\nu_i+\\  \sum_{i,j=1}^n a_{ij}(t,x,u(t,x)) u\varphi _{klx_j}\nu_i- \varphi_{kl} \psi(t,x,u)=\tilde{g}_{kl},\ 
  w(0,x)=\varphi_{kl}u_0.     
\end{multline}
The estimate  \eqref{a30} yields 
\begin{equation}\label{e30}
 \|w\|_{W_{p}^{1,2}(Q)}\leq
C_{0}(\|\varphi_{kl}u_0 \|_{W_{p}^{2-2/p}(G)}+
\|\tilde{f}_{kl}\|_{L_{p}(Q)}+\|\tilde{g}\|_{W_{p}^{s_{0},2s_{0}}(S)}).
\end{equation}
Estimate every summand on the right-hand side.   
We have that 
\begin{equation}\label{e31}
 \|\sum_{i,j=1}^n (a_{ij}(kh,x_l,u(h,x_l))-a_{ij}(t,x,u(t,x)))w_{x_jx_i}\|_{L_p(Q)}\leq c_0(p)\varepsilon \|w\|_{W_p^{1,2}(Q)}. 
\end{equation} 
 Lemma  1.21 in \cite{pya11} and the conditions on the data imply that
\begin{multline}\label{e32}
 \| \varphi_{kl}\sum_{i,j=1}^n  a_{ijx_i}u_{x_j}-\sum_{i,j=1}^n  a_{ij}u_{x_j}\varphi_{klx_i}- \sum_{i,j=1}^n  a_{ij}u_{x_i}\varphi_{klx_j}-\sum_{i,j=1}^n  a_{ij}u
 \varphi_{kl x_ix_j}\|_{L_p(Q)}\leq \\
 c\|u\|_{W_p^{1-\sigma,2-2\sigma}(Q)}, 
\end{multline} 
 where $\sigma\in (0,1)$ is a parameter defined in embedding theorems.  Next, we have
\begin{equation}\label{e33}
\|\varphi_{kl}(\sum_{i,j=1}^n a_{iju}u_{x_j}u_{x_i}+ b(x,t,u,\nabla u))\|_{L_p(G)}^p\leq c \| \nabla u \|_{L_{2p}(G)}^{2}+ \|g_3\|_{L_p(G)}^p. 
\end{equation} 
We can assume that  $\alpha_1\leq 2/(n+2)$, otherwise we assign  $\alpha_1=2/(n+2)$. 
 Next, we use the inequality  (Lemma  1.14 in \cite{pya11} or Corollary  5.7.3 of Ch. 7 in  \cite{ama})
 $$
 \| \nabla u \|_{L_{2p}(G)}\leq c \| \nabla u \|_{C^{2\alpha_1}(\overline{G})}^{1/2} \|u\|_{W_p^{2-2\alpha_1}(G)}^{1/2}\leq c(M_1) \|u\|_{W_p^{2-2\alpha_1}(G)}^{1/2}.
 $$
 Integrating  \eqref{e33} in $t$ and using  \eqref{e32}, we conclude that 
\begin{equation}\label{e34}
\|\varphi_{kl}(\sum_{i,j=1}^n a_{iju}u_{x_j}u_{x_i}+ b(x,t,u,\nabla u))\|_{L_p(Q)}\leq c_1 \|u \|_{L_p(0,T;W_p^{2-2\alpha_1}(Q))}+ c_2. 
\end{equation} 
The above estimates imply that 
\begin{equation}\label{e35}
\|\tilde{f}_{kl}\|_{L_p(Q)}\leq \varepsilon c_0(p) \|w\|_{W_p^{1,2}(Q)} + c(M,M_1) \|u \|_{W_p^{1-\sigma_1,2-2\sigma_1}(Q)},
\end{equation} 
where  $\sigma_1=\min(\sigma, \alpha_1)$. 
Lemma  \ref{lem3} ensures the inequality 
\begin{multline}\label{e36} 
 \|\sum_{i,j=1}^n a_{ij}(t,x,u(t,x)) u\varphi _{ijx_j}\nu_i- \varphi_{ij} \psi(t,x,u)\|_{W_p^{s_0,2s_0}(S)}\leq \\  c_5(M)+ c_6(M)\|u\|_{W_p^{s_1-\varepsilon_0, 2s_1-2\varepsilon_0}(S)}\leq
 c_5(M)+ c_7 \|u\|_{W_p^{1-\varepsilon_0, 2-2\varepsilon_0}(Q)},
\end{multline}  
with $\varepsilon_0>0$ -a constant.
Moreover, Lemma  \ref{lem3} guarantees the estimate
\begin{multline}\label{e37} 
\|\sum_{i,j=1}^n (a_{ij}(kh,x_l,u(kh,x_l))-a_{ij}(t,x,u(t,x)))w_{x_j}\nu_i\|_{W_p^{s_0,2s_0}(S)}\leq \\ c_1\varepsilon \|w\|_{W_p^{1,2}(Q)} +
c_7(M)  \|w\|_{W_p^{1-\sigma_2,2-2\sigma_2}(Q)},
\end{multline}   
 where $\sigma_2>0$ is a constant. 
 Thus, the estimates  \eqref{e36} and \eqref{e37} validate the inequality 
\begin{equation}\label{e38}
   \|\tilde{g}_{kl}\|_{W_p^{s_0,2s_0}(S)}\leq c_0 \varepsilon \|w\|_{W_p^{1,2}(Q)} + c_8(M,M_1)  \|w\|_{W_p^{1-\sigma_3,2-2\sigma_3}(Q)} + c_9(M,M_1),
\end{equation} 
 where $\sigma_3>0$ is a constant. In view of  \eqref{e30}, \eqref{e35}, \eqref{e38}, we derive that
  \begin{equation}\label{e39}
 \|w\|_{W_{p}^{1,2}(Q)}\leq (c_0(p)+c_1)C_0 \varepsilon \|w\|_{W_p^{1,2}(Q)} + c_9(M,M_1)  \|u\|_{W_p^{1-\sigma_4,2-2\sigma_4}(Q)}+ 
C_{10}(M,M_1), 
\end{equation}
where $\sigma_4>0$ is a constant. Choose   $\varepsilon<1/(2C_0(c_0(p)+c_1))$ and find the corresponding parameter  $h>0$.  We obtain that 
\begin{equation}\label{e40}
 \|w\|_{W_{p}^{1,2}(Q)}\leq  2c_9(M,M_1)  \|u\|_{W_p^{1-\sigma_4,2-2\sigma_4}(Q)}+ 
2 C_{10}(M,M_1). 
\end{equation}
Write out the estimate for a function  $u$
\begin{equation}\label{e41}
 \|u\|_{W_{p}^{1,2}(Q)}\leq \sum_{k,l} \|\varphi_{kl}u\|_{W_{p}^{1,2}(Q)} \leq 
 c_{11}  \|u\|_{W_p^{1-\sigma_4,2-2\sigma_4}(Q)}+ 
 c_{12}(M,M_1),\  
\end{equation}
where the constant  $C_{12}$ depends on  $M,M_1$ and the corresponding norms of the data, $\sigma_4\in (0,1)$.
Next, we employ the interpolation inequality  (Theorem  1.21 in \cite{pya11} or \cite{ama}).
$$
\|u\|_{W_p^{1-\sigma_4,2-2\sigma_4}(Q)}\leq c_{13} \|u\|_{W_p^{1,2}(Q)}^{\theta}\|u\|_{L_p(Q)}^{1-\theta}\leq 
\varepsilon \|u\|_{W_p^{1,2}(Q)}+ c_{14}(\varepsilon),\ 
 \theta=1-\sigma_4.
 $$
where $\varepsilon >0$ is an arbitrary constant. Using this inequality in  \eqref{e41} with  $\varepsilon=1/2C_{11} $, we establish the claim. 
 \end{proof}

Describe the conditions ensuring uniqueness of solutions to our problems: if  $p>n+2$ then, for every  $R>0$ there exist 
nonnegative functions  $g_1\in L_{q_6}(Q)$, $g_2\in L_{q_7}(Q)$ ($q_6\geq (n+2)/2$,
$q_7\geq n+2$) such that 
\begin{multline}\label{e42}
 |b(t,x,u_1,\vec{p}_1)-b(t,x,u_2,\vec{p}_2)|\leq (|u_1-u_2|g_1(t,x) + |\vec{p}_1-\vec{p}_2|g_2(t,x), \\  \forall |u_1|+|\vec{p}_1|+|u_2|+|\vec{p}_2|\leq R;  
\end{multline}
if  $p\in ((n+1)/2, n+2]$ then, for every  $R>0$, there are nonnegative functions  $g_1,g_2$ and constants  $\alpha,\beta\in [1,2]$ such that 
\begin{multline}\label{e421}
 |b(t,x,u_1,\vec{p}_1)-b(t,x,u_2,\vec{p}_2)|\leq (|u_1-u_2|g_1(t,x)(1+(|\vec{p}_1| +|\vec{p}_2|)^\alpha)+\\  g_2(t,x) |\vec{p}_1-\vec{p}_2| (1+ |\vec{p}_1| +|\vec{p}_2|)^\beta), \  \forall |u_1|+|u_2|\leq R,  \ \vec{p}_1,\vec{p}_2\in {\mathbb R}^n,
\end{multline}
where $g_1\in L_{q_8}(Q)$, $q_8\geq (n+2)p/((2+\alpha)p-\alpha(n+2))$, $g_2\in L_{q_9}(Q)$,  $q_9\geq p(n+2)/(p(1+\beta)-\beta (n+2))$. 

Expose the consistency  conditions.
In the case of the problem  {\rm \eqref{e1}, \eqref{e2}} we require that 
\begin{equation}\label{e43}
g(0,x)=u_0(x)|_\Gamma
\end{equation}
and in the case of the problem  {\rm \eqref{e1}, \eqref{e3}} that   
\begin{equation}\label{e44}
 \sum_{i,j=1}^na_{ij}(0,x,u_0(x))u_{0x_j}\nu_i+ \psi(0,x,u_0)|_{S}=0,\  \textrm{if}\  p>3.
\end{equation}

\begin{theorem} \label{thr6}
 If the conditions 
   {\rm \eqref{e6}-\eqref{e8}, \eqref{e2051}, \eqref{e43}} hold then there exists a solution 
       $u\in W_p^{1,2}(Q)$ ($p>(n+2)/2$) to the problem  {\rm \eqref{e1}, \eqref{e2}}
       Under the conditions  {\rm \eqref{e6}-\eqref{e8}, \eqref{e2051}-\eqref{e2053}, \eqref{e44}},  $p>(n+2)/2$, $p\neq 3$, there exists a solution $u\in W_p^{1,2}(Q)$ to the problem 
         {\rm \eqref{e1}, \eqref{e3}}.    
If the conditions  \eqref{e42}, \eqref{e421} are satisfied then a solution to these problems is defined  uniquely.   
 \end{theorem}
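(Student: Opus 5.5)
We prove existence by the method of continuation in a parameter, combined with the a priori bounds of Theorems \ref{thr2}--\ref{thr5}. Fix a simple reference problem that is uniquely solvable and satisfies the same structure conditions with the same constants. A natural choice is
\[
u_t-\sum_{i,j=1}^n\partial_{x_i}(a_{ij}(t,x,u)u_{x_j})=0
\]
with the given data $u_0$, $g$, respectively $\psi$. Introduce the family
\[
M_\lambda u=u_t-\sum_{i,j=1}^n\partial_{x_i}(a_{ij}(t,x,u)u_{x_j})+\lambda b(t,x,u,\nabla u)=0,\qquad \lambda\in[0,1],
\]
with boundary data $\lambda\psi+(1-\lambda)\psi_0$, where $\psi_0$ is some linear term, e.g. $\psi_0=\sigma_0 u$.

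If the reference problem is itself quasilinear, we handle it by a Leray--Schauder type argument. The conditions \eqref{e6}--\eqref{e8} and \eqref{e201} hold uniformly in $\lambda$, since $\lambda b$ and $\lambda\psi$ satisfy the same inequalities with the same $\beta_0,g_1,g_2,g_3,g_4$. Therefore Theorems \ref{thr2}, \ref{thr3}, \ref{thr4} and \ref{thr5} give a bound $\|u_\lambda\|_{W_p^{1,2}(Q)}\le M_2$ for every solution, independent of $\lambda$.

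A cleaner route is the Leray--Schauder fixed point theorem in the space $X=W_p^{1-\sigma,2-2\sigma}(Q)$ with small $\sigma>0$. For $v\in X$ with $\|v\|_{L_\infty}$ controlled, solve the linear problem of Theorem \ref{thr0}:
\[
u_t-\sum_{i,j=1}^n a_{ij}(t,x,v)u_{x_ix_j}=\lambda F(v),
\]
where $F(v)=\sum_{i,j}(a_{ijx_i}+a_{ijv}v_{x_i})v_{x_j}-b(t,x,v,\nabla v)$. The boundary condition is $\sum a_{ij}(t,x,v)u_{x_j}\nu_i=-\lambda\psi(t,x,v)$ (or $u|_S=g$), with $u(0)=u_0$. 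Before passing to the nondivergent form we truncate $a_{ij}$ and $b$ in $u$ outside $[-M-1,M+1]$, where $M$ is the constant from Theorem \ref{thr2}/\ref{thr3}. We must also check that the truncated data satisfy \eqref{e7} and \eqref{e201} with unchanged constants.

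Theorem \ref{thr0} applies because $a_{ij}(t,x,v)$ is continuous: $v$ is Hölder, since $X\subset C^{\alpha,2\alpha}$ for small $\sigma$. In the Neumann case, Lemma \ref{lem3} gives the needed $B^{s_0}$ regularity of $a_{ij}(t,x,v)$ and $\psi(t,x,v)$ on $S$. The compatibility conditions \eqref{e43} and \eqref{e44} are exactly what Theorem \ref{thr0} needs at $t=0$. Here \eqref{e44} is needed when $p>3$; this is the reason for excluding $p=3$, at which the trace $u_{x_j}|_{t=0}$ on $\Gamma$ is borderline. The data $F(v)$ lie in $L_p$ because of \eqref{e8} and the Gagliardo--Nirenberg estimate of $\nabla v$ in $L_{2p}$ used in Theorem \ref{thr5}.

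The map $\lambda,v\mapsto u$ sends $X$ into $W_p^{1,2}(Q)$ boundedly on bounded sets. Since $W_p^{1,2}(Q)\subset X$ compactly, it is compact, and continuity follows from the linear estimate \eqref{a6} applied to differences. At $\lambda=0$ the map is constant in the nonlinear part, so it has degree one. Fixed points for $\lambda\in[0,1]$ are exactly solutions of the $\lambda$-problem, and the a priori bound of Theorem \ref{thr5} gives $\|u\|_X\le C$ uniformly in $\lambda$. Hence Leray--Schauder yields a fixed point at $\lambda=1$. Because $\|u\|_{L_\infty}\le M$, the truncation is inactive, so this fixed point solves \eqref{e1}, \eqref{e2} or \eqref{e1}, \eqref{e3}.

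For uniqueness, take two solutions $u_1,u_2$ and set $w=u_1-u_2$. Then $w$ solves a linear problem with $w(0)=0$ and zero Dirichlet data, or Robin-type data with coefficient $\int_0^1\psi_u\,d\xi\in L_{q_4}(S)$. Its coefficients are obtained from the mean value formula:
\[
\sum\partial_{x_i}(a_{ij}(u_1)w_{x_j})+\sum\partial_{x_i}\bigl((a_{ij}(u_1)-a_{ij}(u_2))u_{2x_j}\bigr),
\]
where the second term is expanded as $a_{iju}$-terms times $w$ and $\nabla w$ with coefficients in $L_{q}$, $q>n+2$. The difference $b(u_1,\nabla u_1)-b(u_2,\nabla u_2)$ is bounded by $g_1|w|+g_2|\nabla w|$ from \eqref{e42}, or, under \eqref{e421}, with weights $(1+|\nabla u_k|)^{\alpha}$ and $(1+|\nabla u_k|)^{\beta}$. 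Since $\nabla u_k\in L_{p(n+2)/(n+2-p)}$ by embedding, the integrability exponents $q_8,q_9$ are chosen precisely so that Hölder's inequality puts the resulting lower-order coefficients into $L_{q_1}$ with $q_1>(n+2)/2$, respectively $L_{q_2}$ with $q_2>n+2$. Checking this exponent bookkeeping is the main delicate point. The resulting linear problem therefore satisfies \eqref{a501}--\eqref{a502}, and the uniqueness part of Theorem \ref{thr0} gives $w=0$.

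In the existence part, the main obstacle is ensuring that the a priori bounds of Theorems \ref{thr2}--\ref{thr5} hold uniformly along the homotopy. In particular, the truncation and the $\lambda$-scaling must preserve \eqref{e7} and \eqref{e201}. If a cutoff destroys \eqref{e7}, we instead scale only $b-b(t,x,0,0)$ and keep $\lambda g_1$-type terms, which preserves the inequalities.
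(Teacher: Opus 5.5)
Your overall plan (uniform a priori bounds from Theorems \ref{thr2}--\ref{thr5} plus a Leray--Schauder argument in a space of type $W_p^{1-\sigma,2-2\sigma}(Q)$) is the paper's plan, but the homotopy you build does not work, for three reasons.

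First, at $\lambda=0$ your map sends $v$ to the solution of $u_t-\sum a_{ij}(t,x,v)u_{x_ix_j}=0$ with the given data. This still depends on $v$ through the principal coefficients, so it is not constant and ``degree one'' is unjustified.

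Second, you pass to nondivergent form and put $\lambda$ in front of all of $F(v)$. A fixed point at $\lambda<1$ therefore solves $u_t-\sum\partial_{x_i}(a_{ij}u_{x_j})+(1-\lambda)\sum(a_{ijx_i}+a_{iju}u_{x_i})u_{x_j}+\lambda b=0$, not $M_\lambda u=0$. The term $(1-\lambda)a_{iju}u_{x_i}u_{x_j}u$ has no sign and is of size $|u||\nabla u|^2$, so \eqref{e7} fails and Theorems \ref{thr2}, \ref{thr3} give no bound uniform in $\lambda$. Truncation does not cure this. With a cutoff $\chi$ onto $[-M-1,M+1]$, $b(t,x,\chi(u),\vec p)\,u$ carries the unbounded factor $|u|/(M+1)$ in front of $\beta_0 a_{ij}p_ip_j$. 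The remainder $a_{iju}(t,x,\chi(u))\chi'(u)p_ip_ju$ forces a $\beta_0$ depending on $M$, so the truncation level becomes circular.

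Third, take the Neumann case with $p>3$. The frozen problem with $u(0)=u_0$ and boundary data $-\lambda\psi(t,x,v)$ has a $W_p^{1,2}(Q)$ solution only if $\sum a_{ij}(0,x,v(0,x))u_{0x_j}\nu_i+\lambda\psi(0,x,v(0,x))=0$ on $\Gamma$. This fails for $\lambda<1$ and for $v\in X$ with $v(0,\cdot)\ne u_0$, so your map is not even defined; \eqref{e44} secures the condition only at $\lambda=1$, $v(0,\cdot)=u_0$.

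The paper avoids all three problems:
\begin{itemize}
\item It deforms the principal part as well, $\tilde a_{ij}=\gamma a_{ij}+(1-\gamma)\delta_{ij}$ together with $\gamma b$, which keeps \eqref{e6}--\eqref{e8} with constants independent of $\gamma$.
\item It keeps the frozen linear problem in divergence form (Theorem \ref{thr0} allows $\partial_{x_i}[\tilde a_{ij}(t,x,w+\Psi)]\in L_{q_2}$), so fixed points are exactly solutions of the $\gamma$-problem.
\item It writes $u=v+\Psi$ with $v(0,\cdot)=0$ and adds the compensating term $-(1-\gamma)\partial u_0/\partial\nu$ to the boundary operator, so compatibility holds for every $\gamma$.
\end{itemize}
Then $\Phi(0,\cdot)$ is constant (a heat problem with fixed data) and the Leray--Schauder theorem applies.

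For uniqueness, reducing to the uniqueness part of Theorem \ref{thr0} breaks down under the stated hypotheses, again for three reasons.
\begin{itemize}
\item Writing $\partial_{x_i}\bigl((a_{ij}(t,x,u_1)-a_{ij}(t,x,u_2))u_{2x_j}\bigr)$ as lower-order terms in $w$ and $\nabla w$ requires $a_{ijx_i}$ and $a_{iju}$ to be Lipschitz in $u$. That means second derivatives of $a_{ij}$, which \eqref{e2051} does not provide.
\item In the Neumann case the Robin coefficient is $\sigma=\sum_{i,j}c_{ij}u_{2x_j}\nu_i+\int_0^1\psi_u(t,x,u_2+sw)\,ds$, with $c_{ij}=\int_0^1a_{iju}(t,x,u_2+sw)\,ds$. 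It lies only in $L_{q_4}(S)$, whereas \eqref{a502} requires Besov smoothness of $\sigma$ on $S$.
\item \eqref{e42} and \eqref{e421} only place the coefficients in $L_q$ with $q\ge(n+2)/2$, resp.\ $q\ge n+2$ (equality allowed, and no $q\ge p$). But \eqref{a501} needs strict inequalities and $q_1,q_2\ge p$.
\end{itemize}
The paper instead tests the difference equation with $w=u_1-u_2$ in $L_2(G)$. The coefficient difference is never differentiated; it only produces $\int|\nabla(u_1+u_2)||w||\nabla w|$. The boundary term needs only $\psi_u\in L_{q_4}(S;C([-R,R]))$, via trace interpolation. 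The borderline exponents are absorbed through the absolute continuity of $\int_0^t\|g_i(\xi,\cdot)\|_{L_q(G)}^{r}\,d\xi$. This gives $w=0$ on time intervals of fixed length, and hence on $[0,T]$.
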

 
\begin{proof}
To prove existence theorems, we employ the conventional scheme exposed in  Sect.  5,6 of Ch. 5 in  \cite{lad}. 
Consider the collections of problems depending on a parameter  $\gamma\in [0,1]$:
\begin{equation}\label{e45}
	Mu= u_t-\sum_{i,j=1}^n \partial_{x_i}((\gamma a_{ij}(t,x,u)+(1-\gamma)\delta_{ij})u_{x_j})+ \gamma b(x,t,u,\nabla u)=0,
\end{equation}
In the former case the initial-boundary conditions are written in the form 
\begin{equation}\label{e46}
	u|_{t=0}=u_0,\ \ u|_{S}= g(t,x), 
\end{equation}
and in the latter in the form 
\begin{equation}\label{e47}
	u|_{t=0}=u_0,\ \  \sum_{i,j=1}^n (\gamma a_{ij}(t,x,u)+(1-\gamma)\delta_{ij}) u_{x_j}\nu_i+ \gamma \psi(t,x,u)-(1-\gamma)\sum_{j=1}^n u_{0x_j}\nu_j  |_{S}=0.  
\end{equation}
For instance, consider the family   \eqref{e45}, \eqref{e47}.
As is easily seen, the conditions  \eqref{e7}-\eqref{e8} are fulfilled for the functions  $\tilde{a}_{ij}=\gamma a_{ij}(t,x,u)+(1-\gamma)\delta_{ij}$
 with the same constants and functions  $g_i$ and the constant  $\delta_0$ in \eqref{e7} is replaced with  $\min(1,\delta_0)$.
The new functions $\tilde{a}_{ij}$, $\tilde{\psi}=\gamma \psi(t,x,u)-(1-\gamma)\sum_{j=1}^n u_{0x_j}\nu_j$ also satisfy 
 \eqref{e2051}-\eqref{e2053} and without loss of generality we can assume that 
 the norms of all  functions in these conditions are estimated by the constants independent of 
 $\gamma$. 
Thus, we can assume that the estimate
\begin{equation}\label{e48}
\|u\|_{W_p^{1,2}(Q)}\leq C(M,M_1),\  \gamma\in [0,1],
\end{equation}
holds and the constant   $C(M,M_1)$ is independent of  $\gamma\in [0,1]$. 
 Construct a function  $\Psi\in W_p^{1,2}(Q)$ such that  $\Psi|_{t=0}=u_0(x)$ and make the change of variables   $u=v+\Psi$. 
In order to construct this function, we can extend  $u_0$ to the whole  ${\mathbb R}^n$ preserving the class  (see Sect. 4.2.2, 4.2.3 in \cite{tri}) and find a solution  
to the Cauchy problem  $\Psi_t-\Delta \Psi=0$, $\Psi|_{t=0}=u_0$ (see Theorem 5.7 in  \cite{denk}).
We arrive at the problem 
\begin{multline}\label{e49}
	M_0v= v_t-\sum_{i,j=1}^n \partial_{x_i}(\tilde{a}_{ij}(t,x,v+\Psi)v_{x_j})- \\ \sum_{i,j=1}^n \partial_{x_i}(\tilde{a}_{ij}(t,x,v+\Psi)\Psi_{x_j}) + {b}(x,t,v+\Psi,\nabla v+\Psi)+\Psi_t=0,
\end{multline}
\begin{equation}\label{e50}
	v|_{t=0}=0,\ \  \sum_{i,j=1}^n  \tilde{a}_{ij}(t,x,v+\Psi) v_{x_j}+ \sum_{i,j=1}^n  \tilde{a}_{ij}(t,x,v+\Psi) \Psi_{x_j} + {\psi}(t,x,v+\Psi)|_{S}=0.  
\end{equation}
Denote by  $\Phi(\gamma,w)$ a solution to the  problem 
\begin{multline}\label{e51}
	M_0v= v_t-\sum_{i,j=1}^n \partial_{x_i}(\tilde{a}_{ij}(t,x,w+\Psi)v_{x_j})= \sum_{i,j=1}^n \partial_{x_i}( \tilde{a}_{ij}(t,x,w+\Psi)\Psi_{x_j}) \\ - {b}(x,t,w+\Psi,\nabla w+\Psi)-\Psi_t =f(w),\  \ v|_{t=0}=0,\
\end{multline}
\begin{equation}\label{e52}
	  \sum_{i,j=1}^n  \tilde{a}_{ij}(t,x,w+\Psi) v_{x_j}=- \sum_{i,j=1}^n  \tilde{a}_{ij}(t,x,w+\Psi) \Psi_{x_j}-  {\psi}(t,x,w+\Psi)|_{S}=g(w),  
\end{equation}
where  $w\in H_1=\{w\in W_{p}^{1-\varepsilon_0,2-2\varepsilon_0}(Q): \  w(0,x)=0\}$ and  $\varepsilon_0$ is a positive number less than the minimum of the constants 
 $\varepsilon$ in \eqref{in1}, \eqref{in2} and the number $1/2-(n+2)/4p$. Under this condition  $W_{p}^{1-\varepsilon_0,2-2\varepsilon_0}(Q)\subset W_{2p}^{1/2,1}$ 
(see Corollary  5.6.4 of Ch.  7 in \cite{ama}) or Theorem  1.22 in \cite{pya11}). Moreover, we can assume  decreasing $\varepsilon_0 $ if necessary that 
$W_p^{1-\varepsilon_0,2-2\varepsilon_0}(Q)\subset B_{pq_4/(q_4-p) p}^{s_0+1/2p-1/2q+\varepsilon, 2s_0+1/p-1/q+2\varepsilon}(Q) $ (Corollary  5.6.4 of Ch. 7 in \cite{ama})
and thus we have the estimate 
\begin{equation}\label{a52}
\|v\|_{B_{pq_4/(q_4-p) p}^{s_0+\varepsilon, 2s_0+2\varepsilon}(S)}\leq \|v\|_{B_{pq_4/(q_4-p) p}^{s_0+1/2p-1/2q+\varepsilon, 2s_0+1/p-1/q+2\varepsilon}(Q)}\leq 
c_1\|v\|_{W_p^{1-\varepsilon_0,2-2\varepsilon_0}(Q)}\ \forall v\in H_1. 
\end{equation}
 Endow the space  $H_1$ with the norm coinciding with the norm in $W_{p}^{1-\varepsilon_0,2-2\varepsilon_0}(Q)$.  If 
  $w\in H_1$ then the right-hand sides in \eqref{e51},  \eqref{e52} belong to  $L_p(Q)$, $W_p^{s_0,2s_0}(S)$, respectively,  (see Lemma \ref{lem3} and the proof of Theorem 
   \ref{thr4}). 
Note that 
\begin{multline}\label{e53} 
  W_{2p}^{1/2,1}(Q)\subset   C^{1/2-(n+2)/4p, 1-(n+2)/2p}(\overline{Q}),\\ u\in W_{2p}^{1/2,1}(Q) \Rightarrow
u|_{S}\in W_{2p}^{1/2-1/4p,1-1/2p}(S)\subset  B_{q_3p}^{s_0}(0,T; L_{q_3}(\Gamma))\cap L_{q_3}(0,T; B_{q_3p}^{2s_0}(\Gamma)).
  \end{multline}
By Theorem  \ref{thr0}, there exists a unique solution 
 to the problem  \eqref{e51},  \eqref{e52} of the class  $W_p^{1,2}(Q)$. 
 The following inequality is valid: 
 $$
 \tilde{\delta}_0|\xi|^2\leq \sum_{i,j=1}^n \tilde{a}_{ij}\xi_i\xi_j \leq \frac{1}{\tilde{\delta}_0}|\xi|^2,
 $$
 where  $\tilde{\delta}_0=\min(\delta_0,\delta_1)$,  $1/\delta_1=\max(1/\delta_0,\sup_{|w|\leq R_0}\sup_{|\xi|=1}\sum_{i,j=1}^n \tilde{a}_{ij}(t,x,w)\xi_i\xi_j)$, $R_0=\|w\|_{C(\overline{Q})}$.
 In this case, we infer 
 \begin{equation}\label{e54}
 \|v\|_{W_{p}^{1,2}(Q)}\leq
C_{0}(\|u_{0}\|_{W_{p}^{2-2/p}(G)}+
\|f(w)\|_{L_{p}(Q)}+\|g(w)\|_{W_{p}^{s_{0},2s_{0}}(S)}),
\end{equation}
where the constant $C_0$ depends on   $\tilde{\delta}_0$ and the norms of  $\tilde{a}_{ij}$.
Since the embedding  $W_{p}^{1,2}(Q)\subset W_{2p}^{1/2,1}(Q)$  \cite[Theorem 7.5.2, Ch. 7]{ama} is compact,  the estimate \eqref{e54} implies that the mapping 
$w\to \Phi(\gamma,w) $ is compact as well. Demonstrate that it is continuous. Let a sequence  $w_n\in H_1$ converges to $w\in H_1$ in the norm of the space  $H_1$.
Consider the sequence  $b_n=b(t,x,w_n+\Psi,\nabla (w_n+\Psi))$. In view of the condition  \eqref{e8}, we can conclude that 
 \begin{equation}\label{e55}
 \|b_n\|_{L_{p}(Q)}\leq c_1+ c_2(\|w_n\|_{L_{2p}(Q)}^2+ \|\nabla w_n\|_{L_{2p}(Q)}^2)\leq (c_3+c_4R_1),\  R_1=\max_n\|w_n\|_{H_1}, 
\end{equation}
Assign   $R_2=\max_n \|w_n+\Psi\|_{C^{1/2-(n+2)/4p, 1-(n+2)/2p}(\overline{Q})}$.  In view of  \eqref{e53}, $R_2<\infty$. 
By Lemma 3.1 of Ch.  2 in \cite{lad}, there exists a subsequence  $w_{n_k}$ such that  $w_{n_k}\to w$, $\nabla w_{n_k}\to \nabla w$ a.e. in $Q$. 
By Theorem  4.9  in \cite{bre}, choosing one more subsequence if necessary, we can say that 
there exists a function  $\psi\in L_{2p}(Q)$ such that  $|w_{n_k}|+|\nabla w_{n_k}|\leq \psi(t,x)$ a.e.  
In this case  $b_{n_k}-b\to 0$ a.e. and $|b_{n_k}-b|^p\leq 2g_3^p+C_2 |\psi|^{2p} $ a.~e.  
The Lebesgue dominated convergence theorem implies that  $\|b_{n_k}-b\|_{L_{p}(Q)}\to 0$ as  $k\to \infty$. 
Consider the second function  $l(w_n)= \sum_{i,j=1}^n \partial_{x_i}( \tilde{a}_{ij}(t,x,w_n+\Psi)\Psi_{x_j})$ occurring into  $f(w_n)$.
We have that 
 \begin{multline*}
 l(w_n)= \sum_{i,j=1}^n  (\tilde{a}_{ijx_i}(t,x,w_n+\Psi)\Psi_{x_j}+\tilde{a}_{iju}(t,x,w_n+\Psi)(w_n+\Psi)_{x_i}\Psi_{x_j}\\
 +\tilde{a}_{ij}(t,x,w_n+\Psi)\Psi_{x_ix_j})=I_1(w_n)+I_2(w_n)+I_3(w_n).
 \end{multline*}
 Since the functions  $\tilde{a}_{ij}$ are continuous and   $\|w_n-w\|_{C^{1/2-(n+2)/4p, 1-(n+2)/2p}(\overline{Q})}\to 0$ (see \eqref{e53}),  
 $\|I_3(w_n)-I_3(w)\|_{L_p(Q)}\to 0 $ as  $n\to \infty$. 
 Moreover,  we have that 
 \begin{multline*}
 I_2(w_{n})-I_2(w)=\sum_{i,j=1}^n (
 \frac{1}{2}(\tilde{a}_{iju}(t,x,w_n+\Psi)+\tilde{a}_{iju}(t,x,w+\Psi))(w_n-w)_{x_i}\Psi_{x_j}+\\
 \frac{1}{2}(\tilde{a}_{iju}(t,x,w_n+\Psi)-\tilde{a}_{iju}(t,x,w+\Psi))(w_n+w)_{x_i}\Psi_{x_j})
 \end{multline*}
 This representation ensures the estimate 
 \begin{multline*}
 \|I_2(w_{n})-I_2(w)\|_{L_p(Q)}\leq c \|\nabla (w_n-w)\|_{L_{2p}(Q)}\|\nabla \Psi\|_{L_{2p}(Q)}+ \\
 \sum_{i,j=1}^n \|(\tilde{a}_{iju}(t,x,w_n+\Psi)-\tilde{a}_{iju}(t,x,w+\Psi))\Psi_{x_j}\|_{L_{2p}(Q)}\|\nabla (w_n+w)\|_{L_{2p}(Q)}\to 0 \  \textrm{as}\ n\to \infty,
 \end{multline*}
 where the last summand tends to 0 by the Lebesgue dominated convergence theorem. 
 At last, we derive that  
 \begin{multline*}
 \|I_1(w_{n_k})-I_1(w)\|_{L_p(Q)}\leq c\sum_{i,j=1}^n \|(\tilde{a}_{ijx_i}(t,x,w_{n_k}+\Psi)-\tilde{a}_{ijx_i}(t,x,w+\Psi))\Psi_{x_j}\|_{L_{p}(Q)}\leq \\
 C\sum_{i,j=1}^n \|\tilde{a}_{ijx_i}(t,x,w_{n_k}+\Psi)-\tilde{a}_{ijx_i}(t,x,w+\Psi)\|_{L_{q_3}(Q)}\to 0 \  \textrm{as}\ k\to \infty
 \end{multline*}
  also in view of the Lebesgue dominated convergence theorem. Indeed,   $\tilde{a}_{ijx_i}(t,x,w_{n_k}+\Psi)-\tilde{a}_{ijx_i}(t,x,w+\Psi))\to 0$ a.e. in $Q$.
 On the other hand,   $|\tilde{a}_{ijx_i}(t,x,w_{n_k}+\Psi)-\tilde{a}_{ijx_i}(t,x,w+\Psi)|\leq 2\|\tilde{a}_{ijx_i}(t,x,w)\|_{C([-R_2,R_2])}$
 and the last function is integrable in view of \eqref{e251}.
 Finally we can say that  $\|f(w_{n_k})-f(w)\|_{L_p(Q)}\to 0$ as $k\to \infty$. 
 Now, we study the question of  convergence of the function 
 $ 
g(w_n) = - \sum_{i,j=1}^n  \tilde{a}_{ij}(t,x,w_n+\Psi) \Psi_{x_j}-  {\psi}(t,x,w_n+\Psi)|_{S}=J_1(w_n)+J_2(w_n).
$
Estimate  $
\|J_2(w_{n_k})-J_2(w)\|_{W_p^{s_0,2s_0}(S)}. 
$
It suffices to establish the necessary estimate in the simplest case 
$S=(0,\infty)\times \Gamma$,  $\Gamma={\mathbb R}^{n-1}$, $Q=(0,\infty)\times {\mathbb R}^n_+$, ${\mathbb R}^n_+=\{x\in {\mathbb R}^n: \  x_n>0\}$ assuming that 
all functions are compactly supported. The norm in this case is defined by the equality  \eqref{ee}.
We have that  $\Delta_{\tau,t}(J_2(w_{n_k})-J_2(w))=[\psi(t+\tau,x,w_{n_k}(t+\tau,x)+\Psi(t+\tau,x))- \psi(t,x,w_{n_k}(t+\tau,x)+\Psi(t+\tau,x))-\psi(t+\tau,x,w(t+\tau,x)+\Psi(t+\tau,x))+ 
 \psi(t,x,w(t+\tau,x)+\Psi(t+\tau,x))]+[\psi (t,x, w_{n_k}(t+\tau,x)+\Psi(t+\tau,x))-\psi(t,x, w_{n_k}(t,x)+\Psi(t,x))-\psi (t,x, w(t+\tau,x)+\Psi(t+\tau,x))+\psi(t,x,w(t,x)+\Psi(t,x))]
 =I_{1k}+I_{2k}$. The expression  $\Delta_{h,x}(J_2(w_{n_k})-J_2(w))$ can be represented similarly.
The triangle inequality yields 
\begin{multline*}
 \int_{0}^{\delta} \frac{1}{|\tau|^{1+s_0p}} 
 \|\Delta_{\tau,t} (J_2(w_{n_k})-J_2(w))\|_{L_{p}(S)}^p\,d\tau\leq c(p)\Bigl( \int_{0}^{\delta} \frac{1}{|\tau|^{1+s_0p}} 
 \|I_{1k}(\tau,t,x)\|_{L_{p}(S)}^p\,d\tau+\\  \int_{0}^{\delta} \frac{1}{|\tau|^{1+s_0p}} 
 \|I_{2k}(\tau,t,x)\|_{L_{p}(S)}^p\,d\tau\Bigr)=c(p)(J_{1k}+J_{2k}). 
\end{multline*}
We have that $I_{1k}\to 0$ as $k\to\infty$ a.e. on $S\times (0,\delta)$ and $|I_1|\leq c\|\Delta_{\tau,t}\psi(t,x,w)\|_{C([-R_2,R_2])}$ and the right-hand side here to the power $p$ with the weight
$1/|\tau|^{1+s_0p}$  is integrable. 
The Lebesgue dominated converge theorem ensures that  $J_{1k}\to 0 $ as $k\to \infty$. 
Next, we derive that 
\begin{multline*}
I_{2k}(\tau,t,x)=\int_0^1  \psi_u(t,x, w_{n_k}(t,x)+\Psi(t,x)+r(w_{n_k}(t+\tau,x)-w_{n_k}(t,x)+\Psi(t+\tau,x)- \\ \Psi(t,x)))dr (w_{n_k}(t+\tau,x)-w_{n_k}(t,x)+\Psi(t+\tau,x)-\Psi(t,x))-
\int_0^1  \psi_u(t,x, w(t,x)+\Psi(t,x)+\\ r(w(t+\tau,x)-w(t,x)+  \Psi(t+\tau,x)-\Psi(t,x)))dr (w(t+\tau,x)-w(t,x)+\Psi(t+\tau,x)-\Psi(t,x)).
\end{multline*}
This equality can be rewritten in the form 
\begin{multline*}
I_{2k}(\tau,t,x)=\frac{1}{2}\int_0^1  \psi_u(t,x, w_{n_k}(t,x)+\Psi(t,x)+r(\Delta_{\tau,t}(w_{n_k}(t,x)+\Psi(t,x)))) + \\ 
\psi_u(t,x, w(t,x)+  \Psi(t,x)+ r(\Delta_{\tau,t}(w(t,x)+  \Psi(t,x)))) dr   \Delta_{\tau,t}(w_{n_k}(t,x)-w(t,x))+   
  \\ \frac{1}{2}\int\limits_0^1  \psi_u(t,x, w_{n_k}(t,x)+ \Psi(t,x)+r(\Delta_{\tau,t}(w_{n_k}(t,x)+\Psi(t,x)))) -   
\psi_u(t,x, w(t,x)+\Psi(t,x) \\ + r(\Delta_{\tau,t}(w(t,x)+  \Psi(t,x)))) dr   \Delta_{\tau,t}(w_{n_k}(t,x)+w(t,x)+ 2\Psi(t,x))=\\
\frac{1}{2}\int\limits_0^1  \tilde{I}_{1k}(\tau,t,x,r)\,dr     \Delta_{\tau,t}(w_{n_k}(t,x)-w(t,x)) +  
 \frac{1}{2}\int\limits_0^1  \tilde{I}_{2k}(\tau,t,x,r)\,dr      \Delta_{\tau,t}(w_{n_k}(t,x)+\\ w(t,x)+ 2\Psi(t,x))  =
 \frac{1}{2}\int\limits_0^1  (\tilde{I}_{1k}(\tau,t,x,r)+\tilde{I}_{2k}(\tau,t,x,r))\,dr     \Delta_{\tau,t}(w_{n_k}(t,x)-w(t,x)) +  \\
 \frac{1}{2}\int\limits_0^1  \tilde{I}_{2k}(\tau,t,x,r)\,dr     \Delta_{\tau,t}(2w(t,x)+ 2\Psi(t,x)).
 \end{multline*}
In view of  \eqref{a52}, 
\begin{equation}\label{a53}
\|w_{n_k}-w\|_{B_{pq_4/(q_4-p) p}^{s_0}(S)}\leq c \|w_{n_k}-w\|_{H_1} \to 0\ \textrm{as}\  k\to \infty. 
\end{equation}
The H\"{o}lder inequality  implies that 
\begin{multline*}
J_{2k}\leq c \|\psi_u (t,x,w)\|_{L_{q_4}(S;C([-R_2,R_2]))}\|w_{n_k}-w\|_{B_{pq_4/(q_4-p) p}^{s_0}(S)}+  \\
c_1\int_0^\delta \frac{1}{\tau^{1+s_0p}}\Bigl( \int_0^1\int_S |\tilde{I}_{2k}|^{q_4}\,dSdr\Bigr)^{p/q_4}  \|\Delta_{\tau,t}(w+\Psi)\|_{L_{pq_4/(q_4-p) p}(S)}^p\,d\tau.
\end{multline*} 
The former summand on the right-hand side tend to zero as $k\to \infty$  in view of \eqref{a53}. 
 The Lebesgue dominated converge theorem ensures that  $ \int_0^1\int_S |\tilde{I}_{2k}|^{q_4}\,dSdr\to 0$ as $k\to \infty$. Indeed,  $|\tilde{I}_{2k}|^{q_4}\leq 
\|\psi_u (t,x,w)\|^{q_4}_{C([-R_2,R_2])}$ and the last function is integrable. Moreover, for all $r\in (0,1), \tau>0$ and almost all  $(t,x)$ $I_{2k}\to 0 $ as $k\to \infty$. 
Again, the Lebesgue dominated convergence theorem and the estimate 
\begin{multline*}
\Bigl( \int_0^1\int_S |\tilde{I}_{2k}|^{q_4}\,dSdr\Bigr)^{p/q_4}  \|\Delta_{\tau,t}(w+\Psi)\|_{L_{pq_4/(q_4-p) p}(S)}^p\leq \\ c_2 \|\psi_u (t,x,w)\|_{L_{q_4}(Q;C^([-R_2,R_2]))}^{p}
 \|\Delta_{\tau,t}(w+\Psi)\|_{L_{pq_4/(q_4-p) p}(S)}^p
\end{multline*} 
ensures that the second integral tend to 0 as  $k\to \infty$. Thus,  $J_{2k}\to 0$ as $k\to \infty$. 
We have proven that  $J_2(w_{n_k})-J_2(w)\to 0$ as $k\to \infty$ in $W_p^{s_0}(0,T;L_p(\Gamma))$.
Similar arguments prove that  $J_2(w_{n_k})-J_2(w)\to 0$ as $k\to \infty$ in $L_p(0,T;W_p^{2s_0}(\Gamma))$.
Finally,  $\|\psi(t,x,w_{n_k}+\Psi)  - \psi(t,x,w+\Psi)\|_{W_p^{s_0,2s_0}(S)}\to 0$ as $k\to \infty$. 
Similar arguments are employed in the proof of the convergence 
$\sum_{i,j=1}^n  \tilde{a}_{ij}(t,x,w_n+\Psi) \Psi_{x_j}\to \sum_{i,j=1}^n  \tilde{a}_{ij}(t,x,w+\Psi) \Psi_{x_j}$ in the space $W_p^{s_0,2s_0}(S)$.

Denote by  $v_n,v$ the solutions to the problem \eqref{e51}, \eqref{e52} relating to the  $w_n,w$. 
 In this case the difference  $\varphi_{k}=v_{n_k}-v$ is a solution to the problem 
 \begin{multline}\label{e56}
	M_0\varphi_n= \varphi_{kt}-\frac{1}{2}\sum_{i,j=1}^n \partial_{x_i}((\tilde{a}_{ij}(t,x,w_{n_k}+\Psi)+ \tilde{a}_{ij}(t,x,w+\Psi))\varphi_{kx_j})= \\
\frac{1}{2}\sum_{i,j=1}^n \partial_{x_i}((\tilde{a}_{ij}(t,x,w_{n_k}+\Psi) - \tilde{a}_{ij}(t,x,w+\Psi))(v_{nx_j}+v_{x_j})) + f(w_n)-f(w),
\end{multline}
The right-hand side can be written as  $\tilde{f}(w_{n_k})-\tilde{f}(w)$.
\begin{multline}\label{e57}
	\varphi_k|_{t=0}=0,\ \  \sum_{i,j=1}^n  \frac{1}{2} (\tilde{a}_{ij}(t,x,w_{n_k}+\Psi)+ \tilde{a}_{ij}(t,x,w+\Psi)) \varphi_{kx_j}= -  
\sum_{i,j=1}^n  \frac{1}{2} (\tilde{a}_{ij}(t,x,w_{n_k}+\Psi)\\ - \tilde{a}_{ij}(t,x,w+\Psi)) (v_{n_kx_j}+v_{x_j})+ g(w_n)-g(w)=\tilde{g}(w_n)-\tilde{g}(w) .  
\end{multline}
The functions  $v_n$ meet the estimate  \eqref{e54} and, hence, the norms  $\|v_n\|_{W_p^{1,2}(Q)}$ are bounded uniformly in  $n$. 
In this case the difference  $\varphi_k$ also satisfies the estimate  $\eqref{e54}$ with the right-hand side  $c(\|\tilde{f}(w_{n_k})-\tilde{f}(w)\|_{L_p(Q)}+
\|\tilde{g}(w_{n_k})-\tilde{g}(w)\|_{W_p^{s_0,2s_0}(S)})$. As in the proofs of the convergence   $f(w_{n_k})\to f(w)$, $g(w_{n_k})\to g(w)$, we can demonstrate that 
 $\tilde{f}(w_{n_k})\to \tilde{f}(w)$, $\tilde{g}(w_{n_k})\to \tilde{g}(w)$.
Thus, the convergence  $w_n\to w$ in $H_1$ implies that there exists a subsequence  $w_{n_k}$ such that 
$\Phi(\gamma, w_{n_k})\to \Phi(\gamma, w)$. In this case   $\Phi(\gamma, w_{n})\to \Phi(\gamma, w)$ as $n\to \infty.$ 
We have proven that the mapping  $w\to \Phi(\gamma, w)$ is continuous and compact, its continuity in the parameter 
 $\gamma\in [0,1]$ is obvious.
 
As is easily seen,  $v$ is a solution to the problem  \eqref{e51}, \eqref{e52} if and only if  $v$ is a fixed point of the mapping 
$\Phi(\gamma,w)$, i.~e. 
\begin{equation}\label{e58}
	v=\Phi(\gamma,v).  
\end{equation}
For  $\gamma=0$, a solution to the problem  \eqref{e47}, \eqref{e48} exists and  $v+\Psi=u$ is a solution to the problem   $u_t-\Delta u=0$, $u|_{t=0}=u_0$,
$\frac{\partial u}{\partial \nu}=\frac{\partial u_0}{\partial \nu}$. 
Moreover, we have estimates uniform on the parameter  $\gamma$. By Theorem  37.6 in \cite{kra}
the equation  \eqref{e58} has a solution for all $\gamma\in [0,1]$. 
Now we prove uniqueness of solutions to our problem 
{\rm \eqref{e1}, \eqref{e3}}. Let  $u_1,u_2$ be two solutions of the problem. Subtracting equations  \eqref{e1} for $u_1$ and $u_2$, multiplying 
the inequality obtained  by 
$v=u_1-u_2$, and integrating over  $G$, we derive that 
\begin{multline}\label{e59}
\partial_t \int_G \frac{v^2}{2} +
\frac{1}{2}\sum_{i,j=1}^n (a_{ij}(t,x,u_1)+ a_{ij}(t,x,u_2)) v_{x_j}v_{x_i}\,dx  =-\int_{\Gamma}(\psi(t,x,u_1)-\psi(t,x,u_2))v\,d\Gamma
\\ + \int_G\frac{1}{2}\sum_{i,j=1}^n ({a}_{ij}(t,x,u_1)- \tilde{a}_{ij}(t,x,u_2))(u_{1x_j}+u_{2x_j})v_{x_i}  -(b(t,x,u_1)-b(t,x,u_2))v\,dx
\end{multline}
Fix  $t\in (0,T)$.
Integrating  from 0 to  $\tau\leq t$ and using the conditions on the coefficients, we arrive at the inequality
\begin{multline}\label{e60}
\int_G \frac{v^2}{2}(\tau,x) \,dx+ \int_0^\tau\int_G\delta_0|\nabla v(\xi,x)|^2\,dxd\xi \leq
\int_0^\tau \int_{\Gamma}|(\psi(\xi,x,u_1)-\psi(\xi,x,u_2))v|\,d\Gamma d\xi
\\ + c_1\int_0^\tau\int_G|\nabla (u_{1}+u_{2})| |v||\nabla v| +|(b(\xi,x,u_1)-b(\xi,x,u_2))v|\,dGd\xi\leq \\
 \int_0^t \int_{\Gamma}|(\psi(\xi,x,u_1)-\psi(\xi,x,u_2))v|\,d\Gamma d\xi \\
+ c_1\int_0^t\int_G|\nabla (u_{1}+u_{2})| |v||\nabla v| +|(b(\xi,x,u_1,\nabla u_1)-b(\xi,x,u_2,\nabla u_2))v|\,dGd\xi.
\end{multline}
Taking the maximum in  $\tau$ on the left-hand side, we conclude that  
\begin{multline}\label{e61}
 \max_{\tau\in [0,t]}\int_G \frac{v^2(\tau,x)}{2} \,dx+ \int_0^t\int_G\delta_0|\nabla v|^2\,dxd\xi\leq
\int_0^t \int_{\Gamma}|(\psi(\xi,x,u_1)-\psi(\xi,x,u_2))v|\,d\Gamma d\xi
\\ + c_1\int_0^t\int_G|\nabla (u_{1}+u_{2})| |v||\nabla v| +|(b(\xi,x,u_1,\nabla u_1)-b(\xi,x,u_2,\nabla u_2))v|\,dGd\xi. 
\end{multline}
Estimate every summand on the right-hand side. Let $R=\|u_1\|_{C(S)}+\|u_2\|_{C(S)}$. 
We have
\begin{multline}\label{e62}
\int\limits_0^t \int\limits_{\Gamma}|(\psi(\xi,x,u_1)-\psi(\xi,x,u_2))v|\,d\Gamma d\xi=
\int\limits_0^t \int\limits_{\Gamma}\Bigl|\int_0^1\psi_u(\xi,x,u_1+s(u_2-u_1))\,ds\Bigr| |v|^2\,d\Gamma d\xi \\ \leq  \int\limits_0^t \|\psi(\xi,x,u)\|_{L_{q_4}(\Gamma;C^([-R,R]))}\|v\|_{L_{2 q_4'}(\Gamma)}^2\, d\xi.
\end{multline}
The embedding theorems  \cite[Lemma 1.14]{pya11} and the trace theorems  \cite[Lemma 1.9]{pya11}  validate the estimate 
\begin{equation}\label{e63}
\|v\|_{L_{2 q_4'}(\Gamma)}^2\leq c \|v\|_{W_{2}^s(\Gamma)}^2\leq \|v\|_{W_{2}^{s+1/2}(G)}^2\leq c_2 \|v\|_{W_{2}^{1}(G)}^{2(s+1/2)}\|v\|_{L_{2}(G)}^{2(s-1/2)},
\end{equation}
where $s=(n-1)/2q_4$. Note that  $s+1/2<1$. In this case the inequality  \eqref{e64} yields 
  $\|v\|_{L_{2 q_4'}(\Gamma)}^2\leq \varepsilon \|v\|_{W_{2}^{1}(G)}^{2}+c(\varepsilon) \|v\|_{L_{2}(G)}^{2}$. This inequality and  \eqref{e62} imply that 
 \begin{equation}\label{e65}
 \int_0^t \int_{\Gamma}|(\psi(\xi,x,u_1)-\psi(\xi,x,u_2))v|\,d\Gamma d\xi\leq   \varepsilon \int_0^t \|\nabla v\|_{L_2(G)}^2\,d\xi + c(\varepsilon) t  \|v\|_{L_{\infty}(0,t;L_2(G))}^{2}. 
 \end{equation} 
 Estimate the second summand in  \eqref{e61}. The H\"{o}lder inequality ensures that 
  \begin{equation}\label{e66}
 \int_0^t\int_G|\nabla (u_{1}+u_{2})| |v||\nabla v|\, dxd\xi \leq  \int_0^t\|\nabla (u_{1}+u_{2})\|_{L_q(G)} \|\nabla v\|_{L_2(G)}\| v\|_{L_{2q/(q-2)}(G)}\,d\xi.
 \end{equation} 
 Choose $q=np/(n+2-p)$. In this case  (see Theorem 4.10.2 of Ch. III in \cite{ama1} or Theorem  1.23 in \cite{pya11}) 
 $\|\nabla (u_{1}+u_{2})\|_{L_q(G)}\leq c \|\nabla (u_{1}+u_{2})(\xi,x))\|_{W_p^{1-2/p}(G)}\leq c_1\|u_{1}+u_{2}\|_{W_p^{1,2}(Q)}$.
 As before,  
 $$
 \| v\|_{L_{2q/(q-2)}(G)}\leq  c\|v\|_{W_{2}^{1}(G)}^{2s}\|v\|_{L_{2}(G)}^{2(1-s)}, \  s=n/q<1.
 $$
 In accord with  \eqref{e66}, \eqref{e64}, we infer 
  \begin{multline}\label{e671}
 \int_0^t\int_G|\nabla (u_{1}+u_{2})| |v||\nabla v\|\, dx \leq c_2\int_0^{t}\| \nabla v\|_{L_2(G)}^{1+s}\| v\|_{L_{2q/(q-2)}(G)}^{1-s}\,d\xi\leq \\
 \varepsilon \int_0^t \|\nabla v\|_{L_2(G)}^2\,d\xi + c(\varepsilon) t  \|v\|_{L_{\infty}(0,t;L_2(G))}^{2}, 
  \end{multline} 
 where  $ \varepsilon>0$ is arbitrary. 
 Now take $p>n+2$. 
 The condition  \eqref{e42} implies that 
  \begin{multline}\label{e67}
 \int_0^t\int_G|(b(\xi,x,u_1,\nabla u_1)-b(\xi,x,u_2,\nabla u_2))v|\,dGd\xi\leq \\ \int_0^t\int_G g_1(\xi,x)|v|^2 + g_2(\xi,x) |\nabla v| |v| \,dxd\xi.
 \end{multline} 
  For  the former summand we have the estimate 
\begin{multline}\label{e68}
 \int_0^t\int_G g_1 |v|^2\,d\xi\leq \int_0^t \|g_1\|_{L_{q_6}(G)}  \|v\|_{L_{2q_6'}(G)}^2\,d\xi \leq 
 \int_0^t \|g_1\|_{L_{q_6}(G)}  \|v\|_{W_{2}^1(G)}^{2s}\|v\|_{L_{2}(G)}^{2(1-s)}\,d\xi \leq \\
 \varepsilon \int_0^t \|\nabla v\|_{L_{2}(G)}^{2}\,d\xi + c(\varepsilon)\|v\|_{L_{\infty}(0,t;L_2(G))}^{2}(c(\varepsilon)\int_0^t \|g_1\|_{L_{q_6}(G)}^{1/(1-s)}\,d\xi+\varepsilon t),\ s=n/2q_6,
 \end{multline}
  where we employ the equality  $\|v\|_{W_2^1(G)}^2=\|\nabla v\|_{L_2(G)}^2+\| v\|_{L_2(G)}^2$. 
 Note that  $1/(1-s)\leq q_6$. In view of the absolute continuity of the integral,  $\int_0^t \|g_1\|_{L_{q_6}(G)}^{1/(1-s)}\,d\xi\to 0$ as $t\to 0$. 
 The  integral  $\int_0^t\int_G g_2(\xi,x) |\nabla v| |v| \,dxd\xi$ is estimated similarly. We have
\begin{multline}\label{e69}
\int_0^t\int_G g_2(\xi,x) |\nabla v| |v| \,dxd\xi\leq  
 \int_0^t \|g_2\|_{L_{q_7}(G)}  \|\nabla v\|_{L_{2}(G)}\| v\|_{L_{2q_7'/(2-q_7')}(G)}\,d\xi \leq \\
 \int_0^t  \|g_2\|_{L_{q_7}(G)}  \|v\|_{W_{2}^1(G)}^{1+s}\|v\|_{L_{2}(G)}^{1-s}\,d\xi \leq 
 \varepsilon \int_0^t \|\nabla v\|_{L_{2}(G)}^{2}\,d\xi+ \\ \|v\|_{L_{\infty}(0,t;L_2(G)}^{2}(c(\varepsilon)\int_0^t \|g_2\|_{L_{q_7}(G)}^{2/(1-s)}\,d\xi+\varepsilon t),\ s=n/q_7.
 \end{multline}
Note that  $2/(1-s)\leq q_7$. In this case, the relations   \eqref{e65}, \eqref{e671}, \eqref{e67}, \eqref{e68}, \eqref{e69} and \eqref{e61}, ensure the estimate 
\begin{multline}\label{e70}
 \max_{\tau\in [0,t]}\int\limits_G \frac{v^2(\tau,x)}{2} \,dx+ \int\limits_0^t\int_G\delta_0|\nabla v|^2\,dxd\xi\leq
4\varepsilon \int\limits_0^t \|\nabla v\|_{L_{2}(G)}^{2}\,d\xi+ c(\varepsilon)\|v\|_{L_{\infty}(0,t;L_2(G))}^{2}\varphi(t),
\end{multline}   
where $\varphi(t)$ is a continuous function such that  $\varphi(0)=0$. Choose $\varepsilon$ so that  $4\varepsilon<\delta_0/2$ and $t_0$ so that 
 $ c(\varepsilon)\varphi(t)<1/4$ for $t\leq t_0$. In this case the inequality  $\eqref{e70}$ implies that  $v(t,x)=0$  for $t\leq t_0$. 
Repeating the arguments for  $t\geq t_0$,  we obtain that  $v=0$ on some segment $[t_0,t_1]$, and so on.  It is easy to see that the length of the segments
   $[t_{i-1},t_i]$ does not tend to 0, since it depends on 
  the properties of the functions  $g_i$ and constants arising in the inequalities are really constants from the embedding theorems and interpolaltion inequalities.
  Proceed with the case of  $p\in ((n+2)/2,n+2]$. 
  In this case we have an estimate 
  \begin{multline}\label{e71}
 \int_0^t\int_G|(b(\xi,x,u_1,\nabla u_1)-b(\xi,x,u_2,\nabla u_2))v|\,dGd\xi\leq  \\
 \int_0^t\int_G  g_1 v^2 (1+(|\nabla u_1|+\nabla u_2|)^\alpha) +g_2|\nabla v| |v| (1+ (|\nabla u_1|+\nabla u_2|)^\beta)\,dxd\xi.
 \end{multline} 
The summands  of the form  $\int_0^t\int_G  g_1 v^2 + g_2|\nabla v| |v|\,dxd\xi$ have been already estimated. Estimate the expressions 
$\int_0^t\int_G  g_1 v^2(|\nabla u_1|+\nabla u_2|)^\alpha +g_2|\nabla v| |v| (|\nabla u_1|+\nabla u_2|)^\beta\,dxd\xi$. As in the proof of 
\eqref{e68}, we have 
  \begin{multline}\label{e72}
I_1=\int_0^t\int_G  g_1 v^2(|\nabla u_1|+\nabla u_2|)^\alpha \,dxd\xi\leq 
 \varepsilon \int_0^t \|\nabla v\|_{L_{2}(G)}^{2}\,d\xi+\\  \|v\|_{L_{\infty}(0,t;L_2(G)}^{2}(c(\varepsilon)\int_0^t \|g_1(|\nabla u_1|+|\nabla u_2|)^\alpha\|_{L_{q}(G)}^{1/(1-s)}\,d\xi +\varepsilon t),\ s=n/2q,
\end{multline} 
where $q=q_8np/(np+\alpha q_8(n+2-p))< q_8$ for $p<n+2$. For $p=n+2$, we take $nq_8/2(q_8-1)\geq q<q_8$.  Let  $p<n+2$. The H\"{o}lder inequality 
yields 
  \begin{multline}\label{e73}
\int_0^t \|g_1(|\nabla u_1|+|\nabla u_2)^\alpha|\|_{L_{q}(G)}^{1/(1-s)}\,d\xi \leq \int_0^t \|g_1\|_{L_{q_8}(G)}^{1/(1-s)}
\||\nabla u_1|+|\nabla u_2|\|_{L_{\alpha qq_8/(q_8-q)}(G)}^{\alpha/(1-s)}\,d\xi\leq \\ c \int_0^t \|g_1\|_{L_{q_8}(G)}^{1/(1-s)}\,d\xi,
\end{multline} 
where $\alpha qq_8/(q_8-q)=np/(n+2-p)$ and, thereby, 
$\max_t\||\nabla u_1|+|\nabla u_2|\|_{L_{\alpha qq_8/(q_8-q)}(G)}\leq c(\|u_1\|_{W_p^{1,2}(Q)}+\|u_2\|_{W_p^{1,2}(Q)})$. 
Let  $p=n+2$. Again, we have that 
\begin{multline}\label{e74}
\int_0^t \|g_1(|\nabla u_1|+|\nabla u_2)^\alpha|\|_{L_{q}(G)}^{1/(1-s)}\,d\xi \leq 
 \int_0^t \|g_1\|_{L_{q_8}(G)}^{1/(1-s)}
\||\nabla u_1|+|\nabla u_2|\|_{L_{\alpha qq_8/(q_8-q)}(G)}^{\alpha/(1-s)}\,d\xi\leq\\  c \int_0^t \|g_1\|_{L_{q_8}(G)}^{1/(1-s)}\,d\xi.
\end{multline} 
We have used the fact that,  for $p=n+2$, $\nabla u\in C([0,T];L_q(G))$ for every  $q\geq 1$. 
The condition ov the parameter  $q_8$  in \eqref{e421} ensures the inequality  $1/(1-s)\leq q_8$. 
As before, we infer
  \begin{multline}\label{e75}
 \int_0^t\int_G  g_2 |v| |\nabla v| (|\nabla u_1|+\nabla u_2|)^\beta)\,dxd\xi\leq 
  \varepsilon \int_0^t \|\nabla v\|_{L_{2}(G)}^{2}\,d\xi+ \\ \|v\|_{L_{\infty}(0,t;L_2(G)}^{2}(c(\varepsilon)\int_0^t \|g_2(|\nabla u_1|+|\nabla u_2|)^\beta\|_{L_{q}(G)}^{2/(1-s)}\,d\xi+\varepsilon t),\ s=n/q, 
\end{multline} 
where $q=q_9np/(np+\beta q_9(n+2-p))$ for  $p<n+2$ and $q\in (nq_9/(q_9-1),q_9)$ for $p=n+2$. 
Let us consider the case of  $p<n+2$. We can conclude that 
\begin{multline}\label{e76}
\int_0^t \|g_2(|\nabla u_1|+|\nabla u_2)^\beta|\|_{L_{q}(G)}^{2/(1-s)}\,d\xi \leq \\ \int_0^t \|g_1\|_{L_{q_9}(G)}^{2/(1-s)}
\||\nabla u_1|+|\nabla u_2|\|_{L_{\beta qq_9/(q_9-q)}(G)}^{2\beta/(1-s)}\,d\xi\leq c \int_0^t \|g_1\|_{L_{q_8}(G)}^{2/(1-s)}d\xi,
\end{multline} 
where  $\beta qq_9/(q_9-q)=np/(n+2-p)$ and, thus, the quantity 
$$\max_t\||\nabla u_1|+|\nabla u_2|\|_{L_{2\beta qq_9/(q_9-q)}(G)}\leq  c(\|u_1\|_{W_p^{1,2}(Q)}+\|u_2\|_{W_p^{1,2}(Q)})$$ is finite. 
The same estimate holds for  $p=n+2$. 
The conditions on  $q_9$ in \eqref{e421} ensures the inequality  $2/(1-s)\leq q_9$.
The inequalities  \eqref{e72}-\eqref{e76} imply that the inequality of the form  \eqref{e70} holds which ensures uniqueness of solutions. 
The proofs in the case of the problem  {\rm \eqref{e1}, \eqref{e2}} are much simpler and the sequence of arguments is the same.
So we omit the proofs in this case.   
 \end{proof}

State the results on local solvability of the problems  {\rm \eqref{e1}, \eqref{e2}} and {\rm \eqref{e1}, \eqref{e3}}.   
We present  some new condition on the data. Let $m\leq u_0(x)\leq M$ in $G$ and there exists a constant $m_0>0$ such that
  the functions $a_{ij}(t,x,u)$ are continuous in $\overline{Q}\times (-\infty,\infty)$ and
\begin{equation}\label{e77}
a_{ij}p_ip_j\geq \delta_0|\vec{p}|^2\  \forall \vec{p}=(p_1,\ldots,p_n) \in {\mathbb R}^n, \ (t,x)\in Q, \ -m_0+m\leq u\leq M+m_0,
\end{equation}
where  $\delta_0=const>0$. Let   $R_0=\|\nabla u_0\|_{C(\overline{G})}$. Denote  $B_R=\{(u,\vec{p}):\ |u|+|\vec{p}|\leq R\}$.
The function $b(t,x,u,\vec{p})$ satisfies the Caratheodory condition and
\begin{equation}\label{e78}
b\in L_p(Q;C(B_R))\  \forall R>0. 
\end{equation}
 
 \begin{theorem} \label{thr7}
 Let  the conditions    {\rm \eqref{e2051}, \eqref{e77}, \eqref{e78}} hold  and  $p>n+2$. Then there exists  $\tau_0>0$ such that 
 there exists a solution $u\in W_p^{1,2}(Q_{\tau_0})$  to the problem  {\rm \eqref{e1}, \eqref{e2}}.
       If the conditions {\rm \eqref{e2051}-\eqref{e2053}, \eqref{e77}, \eqref{e78}} hold and  $p>n+2$ then there exists  $\tau_0>0$ such that 
          there exists a solution $u\in W_p^{1,2}(Q_{\tau_0})$ to the problem  {\rm \eqref{e1}, \eqref{e3}}.
If additionally the condition  \eqref{e42} holds then a solution is defined uniquely. 
 \end{theorem}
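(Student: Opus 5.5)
We prove local solvability by a fixed point argument on a small time interval, using the linear Theorem \ref{thr0} and the smallness of lower order norms for short times. Since $p>n+2$, $W_p^{1,2}(Q_\tau)\subset C^{0,1}(\overline{Q}_\tau)$: both $u$ and $\nabla u$ are H\"older continuous. The key consequence is that every $w$ in a ball of $W_p^{1,2}(Q_\tau)$ with $w(0,\cdot)=u_0$ satisfies $|w-u_0|\le c\tau^{\gamma}R$ and $|\nabla w|\le R_0+c\tau^\gamma R$ for some $\gamma>0$. Hence, for small $\tau$, the values of $w$ stay in $[m-m_0,M+m_0]$ and $(w,\nabla w)$ stays in a fixed ball $B_{R_*}$. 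On this range \eqref{e77} gives uniform ellipticity and \eqref{e78} makes $b(t,x,w,\nabla w)$ bounded by a fixed $L_p$ function. No growth conditions like \eqref{e7}, \eqref{e8} are needed, because we never leave this bounded range.

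Concretely, we first construct $\Psi\in W_p^{1,2}(Q_T)$ as in the proof of Theorem \ref{thr6}: either $\Psi$ solves the heat equation with data $u_0$ and $g$ (Dirichlet case, using \eqref{e43}), or $\Psi$ extends $u_0$. We set $u=v+\Psi$ with $v(0)=0$. Let $K_\tau=\{w\in W_p^{1,2}(Q_\tau): w(0)=0,\ \|w\|_{W_p^{1,2}(Q_\tau)}\le R\}$. For $w\in K_\tau$ we define $\Phi(w)=v$ as the solution of the linear problem of the form \eqref{e51}, \eqref{e52} (Neumann case), or the analogous Dirichlet problem, with coefficients frozen at $w+\Psi$; it exists by Theorem \ref{thr0}. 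We truncate $a_{ij}$ and $b$ outside the admissible range, which does not affect the argument. The main step is the a priori bound $\|\Phi(w)\|\le R$ for small $\tau$, via \eqref{a6}. The constant $C_0$ is uniform in $w\in K_\tau$ and in $\tau\le T$: this requires extending functions from $(0,\tau)$ to $(0,T)$ by a bounded extension operator for functions vanishing at $t=0$. The right-hand side contains $\|b(\cdot,w+\Psi,\nabla(w+\Psi))\|_{L_p(Q_\tau)}$, which tends to $0$ as $\tau\to0$ by absolute continuity of the integral. It also contains $\|a_{ijx_i}\Psi_{x_j}\|$, $\|a_{iju}\nabla(w+\Psi)\cdot\nabla\Psi\|$ and $\|a_{ij}\Psi_{x_ix_j}\|_{L_p(Q_\tau)}$, all small as $\tau\to0$ since $\nabla w$ is bounded. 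The boundary data $g(w)$ is estimated in $W_p^{s_0,2s_0}(S_\tau)$ by Lemma \ref{lem3}. Its smallness for small $\tau$ is the delicate point. I would handle it by subtracting the value at $w=0$: the part coming from $\Psi$ alone is a fixed function, and with $R$ chosen as twice the corresponding constant it is absorbed. The differences are controlled through Lemma \ref{lem3} by lower order norms $\|w\|_{W_p^{1-\varepsilon/2,2-\varepsilon}(Q_\tau)}\le c\tau^{\kappa}\|w\|_{W_p^{1,2}(Q_\tau)}$; this bound holds because $w(0)=0$. Thus $\Phi(K_\tau)\subset K_\tau$ for $\tau=\tau_0$ small.

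Compactness and continuity of $\Phi$ in the topology of $W_p^{1-\varepsilon_0,2-2\varepsilon_0}(Q_{\tau_0})$ follow exactly as in the proof of Theorem \ref{thr6}. $K_{\tau_0}$ is convex and closed in that topology, so the Schauder theorem gives a fixed point, hence a solution $u=v+\Psi$. Uniqueness under \eqref{e42} repeats the energy argument \eqref{e59}--\eqref{e70} of Theorem \ref{thr6}. Any two solutions lie in $C^{0,1}(\overline{Q}_{\tau_0})$, so $|u_i|+|\nabla u_i|\le R$ and \eqref{e42} applies directly. The estimate \eqref{e70} then gives $v=0$ step by step in $t$. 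I expect the main obstacle to be the uniform (in $\tau$) smallness of the boundary term in $W_p^{s_0,2s_0}(S_\tau)$, which relies on extension operators and the compatibility condition \eqref{e44}.
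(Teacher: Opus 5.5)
Your argument is correct in outline, but it takes a genuinely different route from the paper. The paper builds no short-time fixed point. It replaces $b$ by $\tilde b(t,x,u,\vec p)=b(t,x,\varphi(u),\psi(\vec p))$, where $\varphi$ is a cutoff onto $[m-m_0,M+m_0]$ and $\psi$ is a bounded cutoff that is the identity for $|\vec p|\le 2\|\nabla u_0\|_{C(\overline G)}$. Then $|\tilde b|$ is dominated by a fixed $L_p$ function, so \eqref{e7}, \eqref{e8} hold trivially, and the global Theorem~\ref{thr6} gives $u\in W_p^{1,2}(Q)$. Finally, continuity of $u$ and $\nabla u$ on $\overline G$-closed $\overline Q$ (since $p>n+2$) yields $\tau_0$ such that the cutoffs are inactive on $Q_{\tau_0}$.

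That route takes only a few lines because it inherits all the global machinery: maximum principle, H\"older and $W_p^{1,2}$ a priori bounds, and continuation in $\gamma$. To be airtight it also needs $a_{ij}$ (smoothly) and $\psi$ cut off in $u$. The reason is that \eqref{e77} gives ellipticity only on a range, while Theorem~\ref{thr6} uses \eqref{e6} for all $u$. Likewise, the Neumann bound of Theorem~\ref{thr3} relies on \eqref{e201}, which Theorem~\ref{thr7} does not assume.

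Your Schauder argument on $K_\tau$ uses only Theorem~\ref{thr0}, Lemma~\ref{lem3} and the embeddings for $p>n+2$, and needs no structure or sign conditions. The price is the bookkeeping of $\tau$-uniform linear constants. This means extending functions that vanish at $t=0$, and using \eqref{e43}, \eqref{e44} so that the data of the problem for $v$ are compatible at $t=0$; this matters because $s_0p>1$ for $p>3$.

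Two steps should be rephrased.

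First, the boundary term. You need neither smallness nor a Lipschitz bound $\|g(w)-g(0)\|_{W_p^{s_0,2s_0}(S_\tau)}\le C\|w\|_{W_p^{1-\varepsilon/2,2-\varepsilon}(Q_\tau)}$. The latter does not follow from \eqref{e2052}, because $\psi_u$ has no regularity in $(t,x)$. The paper itself only obtains continuity of such differences, by dominated convergence, for the term $J_{1k}$ in the proof of Theorem~\ref{thr6}. Use instead the bound form of \eqref{in1} and of the proof of \eqref{in2} (with the derivative falling on $\Psi$):
$\|g(w)\|_{W_p^{s_0,2s_0}(S_\tau)}\le A+B\|w\|_{W_p^{1-\varepsilon/2,2-\varepsilon}(Q_\tau)}\le A+Bc\tau^{\kappa}R$.
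Here $A,B$ depend only on the data and on the sup bound of $w+\Psi$. The constant $C_0$ depends on the modulus of continuity and the Besov norms of $a_{ij}(t,x,w+\Psi)$, and these are independent of $R$ only once $\tau\le\tau(R)$. So fix $C_0,A,B$ first, then set $R=2C_0(A+1)$, then choose $\tau$.

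Second, the uniqueness step. Condition \eqref{e77} makes $\frac12(a_{ij}(t,x,u_1)+a_{ij}(t,x,u_2))$ elliptic only while both solutions stay in $[m-m_0,M+m_0]$. Choose $\tau_0$ so that your solution stays in $[m-m_0/2,M+m_0/2]$. Then run the energy estimate forward from the first time at which the two solutions differ. The paper does not discuss this point either.
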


\begin{proof} Denote   $R=\|\nabla u_0\|_{C(\overline{G})}$. Construct functions  $\varphi,\psi$ such that 
$\varphi(u)=u$ for $u\in [-m_0+m,M+m_0]$,  
$\varphi(u)=M+m_0$ for $u\geq M+m_0$, $\varphi(u)=-m_0+m$ for $u\geq -m_0+m$, $\psi(\vec{p})=\vec{p}$    for $|\vec{p}|\leq 2R$, $\psi(\vec{p})=0$      for  $|\vec{p}|>3R$,
$\psi\in C^1({\mathbb R}^n)$. Define the function  $\tilde{b}(t,x,u,\vec{p})=b(t,x,\varphi(u),\psi(\vec{p}))$. 
Consider the problem 
\eqref{e1}, \eqref{e2} or the problem  \eqref{e1}, \eqref{e3}, where the function  $b(t,x,u,\vec{p}) $ is replaced with  $\tilde{b}(t,x,u,\vec{p})$. 
As is easily seen, the conditions of Theorem  \ref{thr6} are fufilled. 
In this case there exists a solution  $u\in W_p^{1,2}(Q)$ to the problem   \eqref{e1}, \eqref{e2} or, respectively, to the problem \eqref{e1}, \eqref{e3}.
Since  $W_p^{1,2}(Q)\subset C^{1-(n+2)/2p, 2-(n+2)/p}(\overline{Q})$, there exists  $\tau_0>0$ such that  $u(t,x)\in [-m_0+m,M+m_0]$, $\nabla u(t,x)\in B_{2R} $
in $Q_{\tau_0}$. Then  $\varphi(u)=u$, $\psi(\nabla u)=\nabla u$ in  $Q_{\tau_0}$. Hence, the function  $u$ is a solution to the problem 
\eqref{e1}, \eqref{e2} or, respectively,  \eqref{e1}, \eqref{e3} in $Q_{\tau_0}$.
\end{proof}

\section{Discussion}

We examine quasilinear parabolic problems under 
sharp conditions on the data ensuring existence and uniqueness of solutions in Sobolev
classes. The proof relies on estimates resulting from the maximum principle and the H\"{o}lder estimates of the solution. 
 Next, to obtain estimates for higher-order derivatives in $L_p$, we employ the method  of frozen coefficients. 
 The problem is reduced to a problem of finding a fixed point of some compact continuous operator whose solvability is proven  
 by the method of continuation in a parameter.  The local existence theorem is also presented.


\vspace{6pt}




Acknowledgement. This research was supported by the Russian Science Foundation and the Government of the
Khanty-Mansiysk Autonomous Okrug-YUGRA (Grant no. 25-11-20026).

\end{document}